\documentclass[11pt,dvips,twoside,letterpaper]{article}

\usepackage{pslatex}
\usepackage{fancyhdr}
\usepackage{graphicx}
\usepackage{geometry}

\def\figurename{Figure} 
\makeatletter
\renewcommand{\fnum@figure}[1]{\figurename~\thefigure.}
\makeatother

\def\tablename{Table} 
\makeatletter
\renewcommand{\fnum@table}[1]{\tablename~\thetable.}
\makeatother

\usepackage{amsmath}
\usepackage{amssymb}
\usepackage{amsfonts}
\usepackage{amsthm,amscd}

\newtheorem{thm}{Theorem}[section]
\newtheorem{prop}[thm]{Proposition}

\newtheorem{lem}[thm]{Lemma}
\newtheorem{defn}[thm]{Definition}

\theoremstyle{definition}



\newcommand{\bR}{{\mathbb R}}

\newcommand{\bZ}{{\mathbb Z}}

\def\H{{\mathcal H}}

\def\M{{\mathcal M}}

\def\bmo{{\mathfrak {bmo}}}
\def\rbmo{{\mathfrak {rbmo}}}
\def\h{{\mathfrak h}}

\def\supp{{\rm supp}}

\def\X{{\mathcal X}}

\def\BMO{B\! M\! O}

\def\RBMO{R\! B\! M\! O}

\theoremstyle{notation}

\numberwithin{equation}{section}


\setlength{\topmargin}{-0.35in}
\setlength{\textheight}{8.5in}   
\setlength{\textwidth}{5.5in}    
\setlength{\oddsidemargin}{0.5in}
\setlength{\evensidemargin}{0.5in} \setlength{\headheight}{26pt}
\setlength{\headsep}{8pt}

\begin{document}
\title{\bfseries\scshape{Product of functions in $\BMO$ and $\H^{1}$ in non-homogeneous spaces}}
\author{\bfseries\scshape Justin Feuto\thanks{E-mail address: justfeuto@yahoo.fr}\\
UFR Math\'ematiques et Informatique, Universit\'e de Cocody, \\22BP1194 Abidjan, C\^ote d'Ivoire\\
}

\date{}
\maketitle \thispagestyle{empty} \setcounter{page}{1}

\thispagestyle{fancy} \fancyhead{}
\renewcommand{\headrulewidth}{0pt}

\begin{abstract}
Under the assumption that the underlying measure is a non-negative Radon
measure which only satisfies some growth condition and may not be doubling, we define the product of functions in the regular $BMO$ and the atomic block $\H^{1}$ in the sense of distribution, and show that this product may be split into two parts, one in $L^{1}$ and the other in some Hardy-Orlicz space.
\end{abstract}

\noindent {\bf AMS Subject Classification:} 42B25; 42B30; 42B35

\vspace{.08in} \noindent \textbf{Keywords}: local Hardy space, local BMO space, atomic block, block, non doubling measure, Hardy-Orlicz space.

\newpage

\pagestyle{fancy} \fancyhead{} \fancyhead[EC]{J. Feuto,} \fancyhead[EL,OR]{\thepage} \fancyhead[OC]{Product $\RBMO(\mu)-\H^{1}(\mu)$} \fancyfoot{}
\renewcommand\headrulewidth{0.5pt}

\section{Introduction }\label{s1}
In their paper \cite{BIJZ}, Bonami, Iwaniec, Jones and Zinsmeister defined the product of functions  $f\in\BMO(\bR^{n})$ and $h\in\H^{1}(\bR^{n})$ as a distribution operating on a test function $\varphi\in\mathcal D(\bR^{n})$ by the rule
\begin{equation}
\left\langle f\times h,\varphi\right\rangle:=\left\langle f\varphi,h\right\rangle.\label{product}
\end{equation}
 They  proved that such distribution can be written as the sum of a function in $L^{1}(\bR^{n})$ and a distribution in a Hardy-Orlicz space $\H^{\wp}(\bR^{n},\nu)$ where 
\begin{equation}
\wp(t)=\frac{t}{\log(e+t)} \text{ and } d\nu(x)=\frac{dx}{\log(e+\left|x\right|)}.
\end{equation}
 Bonami and Feuto in \cite{BF} considered the case where $\BMO(\bR^{n})$ is replaced by its local version $\bmo(\bR^{n})$ introduced by Golberg in \cite{G}, and proved that in this case, the weighted Hardy-Orlicz space is replaced by a space of amalgam type in the sense of Wiener \cite{W}. Following the idea in \cite{BIJZ} and \cite{BF}, the author in \cite{Fe} generalized this result  in the setting of space of homogeneous type $(\X,d,\mu)$. We recall that a space of homogeneous type is a non-empty set $\X$ equipped with a quasi metric $d$ and a positive Radon measure $\mu$ such that 
\begin{equation}
\mu\left(B(x,2r)\right)\leq C\mu(B(x,r)),\ x\in\X,\ r>0
\end{equation}
 where $B(x,r)=\left\{y\in\X:d(x,y)<r\right\}$ is the ball centered at $x$ and having radius $r$. 
 
 This doubling condition is an essential assumption for most results in classical function spaces, Calder\'on-Zygmund theory and operators theory. However, it has been shown  recently (see \cite{MMNO}, \cite{T1}, \cite{T2}, \cite{Y} and \cite{HL}, and the reference therein) that one can drop the doubling condition and still obtain interesting results in the classical Calder\'on-Zygmund theory and on the classical Hardy and $\BMO$ spaces. In particular, Tolsa in \cite{T1} introduced, when the measure satisfies only the growth condition (\ref{nondoubling}), the regular bounded mean oscillation space $\RBMO(\mu)$ and its predual space $\H^{1,\infty}_{atb}(\mu)$. He showed that these spaces have similar properties to those of the classical $\BMO$ and $\H^{1}$ defined for doubling measures. 
 
 The purpose of this paper is to define the product of function in $\RBMO(\mu)$ and $\H^{1,\infty}_{atb}(\mu)$ in the sense of distribution, and to prove that some results obtained in \cite{BF}, \cite{Fe} and \cite{BIJZ} are valid in this context. To make our idea clear, let us give some notations and definitions.
 
Let $n,d$ be some fixed integers with $0 < n\leq d$. We consider $(\bR^{d},\left|\cdot\right|,\mu)$, where $\left|\cdot\right|$ is the Euclidean metric and $\mu$ a positive Radon measure that only satisfies the following growth condition  
\begin{equation}
\mu\left(B(x,r)\right)\leq C_{0} r^{n}, \text{ for all }x\in\bR^{d}\text{ and }r > 0, \label{nondoubling}
\end{equation} 
where $C_{0}>0$ is an absolute constant. Throughout the  paper, by a cube $Q\subset\bR^{d}$, we mean a closed cube with sides parallel to the axis and centered at some point $x_{Q}$ of \supp $(\mu)$, and if $\left\|\mu\right\|<\infty$, we allow $Q=\bR^{d}$ too. 

If $Q$ is a cube, we denote by $\ell(Q)$ the side length of $Q$  and for $\alpha>0$, we denote $\alpha Q$ the cube with same center as $Q$, but side length $\alpha$ times as long. We will always choose the constant $C_{0}$ in (\ref{nondoubling}) such that for all cubes $Q$, we have $\mu(Q)\leq C_{0}\ell(Q)^{n}$.

For two fixed cubes $Q\subset R$ in $\bR^{d}$, set
  \begin{equation}	S_{Q,R}=1+\sum^{N_{Q,R}}_{k=1}\frac{\mu(2^{k}Q)}{\ell^{n}(2^{k}Q)}
	\end{equation}
 where  $N_{Q,R}$ is the smallest positive integer $k$ such that $\ell(2^{k}Q)\geq \ell(R)$ (in the case $R=\mathbb R^{d}\neq Q$, we set $N_{Q,R}=\infty$).

 For a fixed $\rho>1$ and  $p\in\left(1,\infty\right]$, a function $b\in L^{1}_{loc}(\mu)$ is called a $p$-atomic block if
\begin{enumerate}
	\item[(i)] there exists some cube $R$ such that supp $b\subset R$,
	\item [(ii)] $\int_{\bR^{d}}b\; d\mu=0$,
	\item [(iii)] there are functions $a_{j}$ supported on cubes $Q_{j}\subset R$ and numbers $\lambda_{j}\in\bR$ such that 
	$	b=\sum^{\infty}_{j=1}\lambda_{j}a_{j}$
	and
	\begin{equation}
	\left\|a_{j}\right\|_{L^{p}(\mu)}\leq\left(\mu(\rho Q_{j})\right)^{\frac{1}{p}-1}\left(S_{Q_{j},R}\right)^{-1},\label{a3}
	\end{equation}
\end{enumerate}
where we used the natural convention that $\frac{1}{\infty}=0$. We put 
\begin{equation}
\left|b\right|_{\H^{1,p}_{atb}(\mu)}:=\sum_{j}\left|\lambda_{j}\right|.
\end{equation}
\begin{defn}(\cite{T1})\label{HP}
 We say that $h\in\H^{1,p}_{atb}(\mu)$ if there are $p$-atomic blocks $b_{j}$ such that
\begin{equation}
h=\sum^{\infty}_{j=1}b_{j}\text{ with } \sum^{\infty}_{j=1}\left|b_{j}\right|_{\H^{1,p}_{atb}(\mu)}<\infty,\label{atomicblockdecompo}
\end{equation}
\end{defn}
The atomic block Hardy space $\H^{1,p}_{atb}(\mu)$ is a Banach space when equipped with the norm $\left\|\cdot\right\|_{\H^{1,p}_{atb}(\mu)}$ defined by 
\begin{equation}
\left\|h\right\|_{\H^{1,p}_{atb}(\mu)}=\inf\sum^{\infty}_{j=1}\left|b_{j}\right|_{\H^{1,p}_{atb}(\mu)},\ h\in\H^{1,p}_{atb}(\mu),
\end{equation}
where the infimum is taken over all possible decomposition of $h$ into atomic blocks.
 
 As it is proved in Proposition 5.1 and in Theorem 5.5 of \cite{T1}, the definition of $\H^{1,p}_{atb}(\mu)$ does not depend on $\rho$ and we have that, for all $1<p<\infty$, the spaces $\H^{1,p}_{atb}(\mu)$ are topologically equivalent to $\H^{1,\infty}_{atb}(\mu)$. So in the sequel, we shall use the notation $\H^{1}(\mu)$ instead of $\H^{1,\infty}_{atb}(\mu)$, and take $\rho=2$.
 
When $b\in L^{1}_{loc}(\mu)$ satisfies only Condition (i) and (iii) of the definition of atomic blocks, we say that it is a $p$-block and put 
$\left|b\right|_{\h^{1}_{atb}(\mu)}=\sum_{j}\left|\lambda_{j}\right|$. Moreover, we say that $h$ belongs to the local Hardy space $\h^{1,p}_{atb}(\mu)$ (see \cite{Y}), if there are $p$-atomic blocks or $p$-blocks $b_{j}$ such that
\begin{equation}
h=\sum^{\infty}_{j=1}b_{j},
\end{equation}
where $\sum^{\infty}_{j=1}\left|b_{j}\right|_{\h^{1}_{atb}(\mu)}<\infty$, $b_{j}$ is an atomic block if \supp $b_{j}\subset R_{j}$ and $\ell(R_{j})\leq 1$, and $b_{j}$ is a block if \supp $ b_{j}\subset R_{j}$ and $\ell(R_{j})> 1$. We define the $\h^{1}_{atb}(\mu)$ norm of $h$ by
\begin{equation}
\left\|h\right\|_{\h^{1}_{atb}(\mu)}=\inf\sum^{\infty}_{j=1}\left|b_{j}\right|_{\h^{1}_{atb}(\mu)},
\end{equation}
where the infimum is taken over all possible decompositions of $h$ into atomic blocks or blocks.

The definition of local Hardy space is independent of $\rho>0$ and  for $1<p<\infty$, we have $\h^{1,p}_{atb}(\mu)=\h^{1,\infty}_{atb}(\mu)$ (see Proposition 3.4 and Theorem 3.8 of \cite{Y}). This allow us to just denote it by $\h^{1}(\mu)$ and consider also $\rho=2$.

In Theorem 5.5 of \cite{T1} and  Theorem 3.8 of \cite{Y}, it is proved that the dual space of $\H^{1}(\mu)$ and $\h^{1}(\mu)$ are respectively $\RBMO(\mu)$ and its local version $\rbmo(\mu)$ (see Section 2 for more explanations about these spaces).

Let $h=\sum_{j}b_{j}$ belonging to $\H^{1}(\mu)$, where the atomic block $b_{j}$ is supported in the cube $R_{j}$ and satisfies $b_{j}=\sum_{i}\lambda_{ij}a_{ij}$  for $a_{ij}$'s and $\lambda_{ij}$'s as in the definition of atomic blocks.  For $f\in\RBMO(\mu)$, we denote by $f_{\tilde{R}}$ the mean value of $f$ over the cube $\tilde{R}$, which is an appropriate dilation of the cube $R$ (see Section 2 for more explanation). We can see from the proof of Theorem \ref{main1} that the double series 
\begin{equation}
\sum^{\infty}_{j=1}\left(f-f_{\tilde{R}_{j}}\right)b_{j}=\sum^{\infty}_{j=1}\left(\sum^{\infty}_{i=1}\lambda_{ij}\left(f-f_{\tilde{R}_{j}}\right)a_{ij}\right)
\end{equation}
converges normally in $L^{1}(\mu)$, while 
\begin{equation}
\sum^{\infty}_{j=1}f_{\tilde{R}_{j}}b_{j}=\sum^{\infty}_{j=1}\left(\sum^{\infty}_{i=1}f_{\tilde{R}_{j}}\lambda_{ij}a_{ij}\right)
\end{equation}
 converges in the Hardy-Orlicz space $\H^{\wp}(\nu)$, where $\wp(t)=\frac{t}{\log(e+t)}$ and $d\nu(x)=\frac{d\mu(x)}{\log(e+\left|x\right|)}$. Since both convergence implies convergence in the sense of distribution, we define the product of $f$ and $h$ as the sum of both series by
\begin{equation}
f\times h=\sum^{\infty}_{j=1}\left(f-f_{\tilde{R}_{j}}\right)b_{j}+\sum^{\infty}_{j=1}f_{\tilde{R}_{j}}b_{j}.
\end{equation}
It follows that

\begin{thm}\label{main1}
For $f$  in $\RBMO (\mu)$ and $h$  in $\mathcal H^1(\mu)$, the product $f\times h$ can be given a meaning in the sense of distributions. Moreover, we have the inclusion
\begin{equation}\label{inclusion0}
  f\times h\in L^1(\mu)+ \mathcal H^\wp(\nu).
\end{equation}
\end{thm}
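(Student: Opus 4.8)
The plan is to work from a near-optimal atomic block decomposition $h=\sum_{j}b_{j}$, with $b_{j}$ supported in $R_{j}$ and $b_{j}=\sum_{i}\lambda_{ij}a_{ij}$ as in Definition \ref{HP} (taking $p=\infty$ and $\rho=2$), and to verify the two convergence claims already announced before the statement: that the oscillation part $\sum_{j}(f-f_{\tilde R_{j}})b_{j}$ converges normally in $L^{1}(\mu)$, and that the average part $\sum_{j}f_{\tilde R_{j}}b_{j}$ converges in $\H^{\wp}(\nu)$. Since each type of convergence implies convergence in $\mathcal{D}'$, summing the two series yields a distribution $f\times h$, independent of the representation, lying in $L^{1}(\mu)+\H^{\wp}(\nu)$, which is exactly \eqref{inclusion0}.

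For the $L^{1}$ part I would estimate a single rescaled atom. Because $a_{ij}$ is supported in $Q_{ij}\ss R_{j}$ with $\|a_{ij}\|_{L^{\infty}(\mu)}\le(\mu(2Q_{ij}))^{-1}(S_{Q_{ij},R_{j}})^{-1}$, it suffices to bound $\int_{Q_{ij}}|f-f_{\tilde R_{j}}|\,d\mu$. Writing $|f-f_{\tilde R_{j}}|\le|f-f_{\tilde Q_{ij}}|+|f_{\tilde Q_{ij}}-f_{\tilde R_{j}}|$ and invoking the two structural properties of $\RBMO(\mu)$ — the mean oscillation bound $\int_{Q}|f-f_{\tilde Q}|\,d\mu\lesssim\mu(2Q)\|f\|_{\RBMO}$ and the comparison of averages over nested cubes $|f_{\tilde Q_{ij}}-f_{\tilde R_{j}}|\lesssim S_{Q_{ij},R_{j}}\|f\|_{\RBMO}$ — I obtain $\int_{Q_{ij}}|f-f_{\tilde R_{j}}|\,d\mu\lesssim S_{Q_{ij},R_{j}}\,\mu(2Q_{ij})\,\|f\|_{\RBMO}$, using $S_{Q_{ij},R_{j}}\ge1$. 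The two normalizing factors in the atom cancel exactly against $S_{Q_{ij},R_{j}}$ and $\mu(2Q_{ij})$, so $\|(f-f_{\tilde R_{j}})a_{ij}\|_{L^{1}(\mu)}\lesssim\|f\|_{\RBMO}$. Summing first over $i$ against $|\lambda_{ij}|$ and then over $j$, and taking the infimum over decompositions, bounds the whole series by $C\|f\|_{\RBMO}\|h\|_{\H^{1}(\mu)}$, which is the desired normal convergence.

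For the Orlicz part I would first note that each $g_{j}:=f_{\tilde R_{j}}b_{j}$ is supported in $R_{j}$ and satisfies $\int g_{j}\,d\mu=f_{\tilde R_{j}}\int b_{j}\,d\mu=0$, so it is a scalar multiple of an $\H^{1}$-atomic block. The remaining task is to control the scalars $f_{\tilde R_{j}}$ and to feed the resulting series into the atomic theory of the Hardy--Orlicz space. Comparing $f_{\tilde R_{j}}$ with the average over a fixed reference cube through the nested-cube inequality shows that $|f_{\tilde R_{j}}|$ grows at most logarithmically in $\ell(R_{j})$ and in the position $x_{R_{j}}$ — precisely the growth that the concave $\wp(t)=t/\log(e+t)$ together with the weight $d\nu=d\mu/\log(e+|x|)$ is designed to absorb. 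Using the sublinearity of $\wp$ and the Orlicz-atomic estimate, I would show that the partial sums of $\sum_{j}f_{\tilde R_{j}}b_{j}$ form a Cauchy sequence in $\H^{\wp}(\nu)$ with quasi-norm controlled by $C\|f\|_{\RBMO}\|h\|_{\H^{1}(\mu)}$.

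I expect the Orlicz part to be the main obstacle: matching the logarithmic growth of the averages $f_{\tilde R_{j}}$ against the simultaneous gain coming from $\wp$ and from the weight $\nu$ requires a careful quantitative bound on $|f_{\tilde R_{j}}|$ in terms of the quantities $S_{\cdot,\cdot}$ and $|x_{R_{j}}|$, and then an honest verification that $\sum_{j}f_{\tilde R_{j}}b_{j}$ is summable in the Orlicz--Hardy quasi-norm rather than merely in $L^{1}(\mu)$. The $L^{1}$ part, by contrast, is a direct consequence of the two defining properties of $\RBMO(\mu)$ combined with the $S_{Q,R}$-normalization built into the atomic blocks.
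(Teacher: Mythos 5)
Your treatment of the $L^{1}$ half is essentially the paper's own argument: the splitting $|f-f_{\tilde R_{j}}|\leq|f-f_{\tilde Q_{ij}}|+|f_{\tilde Q_{ij}}-f_{\tilde R_{j}}|$, the two defining properties of $\RBMO(\mu)$, and the cancellation against the normalization $\left\|a_{ij}\right\|_{L^{\infty}(\mu)}\leq\left(\mu(2Q_{ij})S_{Q_{ij},R_{j}}\right)^{-1}$ give exactly the bound $C|\lambda_{ij}|\left\|f\right\|_{\RBMO(\mu)}$ per term, and normal convergence follows from $\sum_{i,j}|\lambda_{ij}|<\infty$. That part is fine.

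The Orlicz half has a genuine gap. You propose to view each $f_{\tilde R_{j}}b_{j}$ as a scalar multiple of an atomic block and to ``feed the resulting series into the atomic theory of the Hardy--Orlicz space,'' but no atomic decomposition theorem for $\H^{\wp}(\nu)$ with a non-doubling $\mu$ is available (none is proved in the paper or in the cited references), and the step you defer --- absorbing the unbounded coefficients $f_{\tilde R_{j}}$ --- is precisely where the difficulty lives. Note that $\sum_{j}|f_{\tilde R_{j}}|\,|b_{j}|_{\H^{1,\infty}_{atb}(\mu)}$ can diverge, and since $\wp$ is not homogeneous the quasi-norm of $f_{\tilde R_{j}}b_{j}$ does not factor as $|f_{\tilde R_{j}}|$ times the quasi-norm of $b_{j}$; so ``sublinearity of $\wp$ plus an Orlicz-atomic estimate'' does not reduce the problem to a convergent numerical series. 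The paper avoids this entirely: it never treats $f_{\tilde R_{j}}$ as a coefficient, but instead uses the pointwise bound
\begin{equation*}
\M\left(f_{\tilde R_{j}}b_{j}\right)\leq\left|f-f_{\tilde R_{j}}\right|\M(b_{j})+\left|f\right|\M(b_{j}),
\end{equation*}
sends the first piece back to the $L^{1}$ argument, and controls the second via the generalized H\"older inequality $\left\|fg\right\|_{L^{\wp}(\nu)}\leq C\left\|f\right\|_{\RBMO^{+}(\mu)}\left\|g\right\|_{L^{1}(\mu)}$ applied with $g=\sum_{j}\M(b_{j})\in L^{1}(\mu)$. That inequality in turn rests on the John--Nirenberg exponential integrability of $f-f_{\tilde{\mathbb Q}}$ against the weight $(1+|x|)^{-(2n+\kappa)}d\mu$ (Lemma 2.4 of the paper) and the duality between $e^{t}-1$ and $t\log(e+t)$. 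To repair your argument you would either have to prove an atomic (or ``molecular'') decomposition for $\H^{\wp}(\nu)$ in this non-doubling setting and then carry out the quantitative summation of the logarithmically growing coefficients, or switch to the paper's product-estimate route; the latter is substantially shorter.
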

When we replaced $\RBMO(\mu)$ by its local version $\rbmo(\mu)$ as define in \cite{Y} (see also \cite{HYY}) we obtain the analogous of the result in \cite{BF}. We also obtain interesting results by replacing both $\RBMO(\mu)$ and $\H^{1}(\mu)$ with their local version.


The paper is organized as follows, in Section 2 we recall the definition of the space $\RBMO(\mu)$, its local version and some properties involved.

Section 3 is devoted to auxiliary results and prerequisites in Orlicz spaces while in Section 4 we give the proof of the main results and their extensions.

Throughout the paper, the letter $C$ is used for non-negative constants that may change from one occurrence to another. Constants with subscript, such as $C_{0}$, do not change in different occurrences. The notation $A\approx B$ stands for $C^{-1}A\leq B\leq CA$, $C$ being a constant not depending on the main parameters involved.
\medskip

\section{Prerequisite about $\RBMO(\mu)$, $\rbmo(\mu)$, $\H^{1}(\mu)$ and $\h^{1}(\mu)$ spaces}

\begin{defn}
Let $\alpha> 1$ and $\beta> \alpha^{n}$, we say that a cube $Q$ is an $(\alpha,\beta)$-doubling cube if $\mu\left(\alpha Q\right)\leq\beta\mu\left(Q\right)$.
 \end{defn}
 It is proved in \cite{T1} that there are a lot of "big " doubling cubes and also a lot of "small" doubling cubes, this due to the facts that $\mu$ satisfies the growth Condition (\ref {nondoubling}) and $\beta>\alpha^{n}$. More precisely, given any point $x\in$\supp $(\mu)$ and $c > 0$, there exists some $(\alpha,\beta)$-doubling cube $Q$ centered at $x$ with $\ell(Q)\geq c$.
 
On the other hand, if $\beta>\alpha^{n}$ then, for $\mu$-a.e. $x\in\bR^{d}$, there exists a sequence of $\left(\alpha,\beta\right)$-doubling cubes $\left\{Q_{k}\right\}_{k\in\mathbb N}$ centered at $x$ with $\ell\left(Q_{k}\right)\rightarrow 0$ as $k\rightarrow\infty$. 

In the following, for any $\alpha> 1$, we denote by $\beta_{\alpha}$ one of these big constants $\beta$. For definiteness, one can assume that $\beta_{\alpha}$ is twice the infimum of these $\beta$'s.

 Given $\rho>1$, we let $N$ be the smallest non-negative integer such that $2^{N}Q$ is $(\rho,\beta_{\rho})$-doubling and we denote this cube by $\tilde{Q}$.
 
 \begin{defn}(\cite{Y}) Let $\rho>1$ be some fixed constant.
 \begin{enumerate}
 \item[(a)] Let $1<\eta<\infty$.  We say that  $f\in L^{1}_{loc}(\mu)$ is in $\RBMO(\mu)$ if there exists a non-negative constant $C_{2}$ such that for any cube $Q$,
\begin{equation}\label{C1}
\frac{1}{\mu(\eta Q)}\int_{Q}\left|f(x)-f_{\tilde{Q}}\right|d\mu(x)\leq C_{2},
\end{equation}
and for any two $(\rho,\beta_{\rho})$-doubling cubes $Q\subset R$
\begin{equation}
\left|f_{Q}-f_{R}\right|\leq C_{2} S_{Q,R}.\label{C2}
\end{equation}
 Let us put
\begin{equation}
\left\|f\right\|_{RBMO(\mu)}=\inf\left\{C_{2}: (\ref{C1}) \text{ and } (\ref{C2})\text{ hold}\right\}.
\end{equation}
\item[(b)] Let $1<\eta\leq\rho<\infty$. We say that $f\in L^{1}_{loc}(\mu)$ belongs to $\rbmo(\mu)$ if there exists some constant $C_{3}$ such that (\ref{C1}) holds for any cube $Q$ with $\ell(Q)\leq 1$ and $C_{3}$ instead of $C_{2}$, (\ref{C2}) holds for any two $(\rho,\beta_{\rho})$-doubling cubes $Q\subset R$ with $\ell(Q)\leq 1$ and $C_{3}$ instead of $C_{2}$, and 
\begin{equation}
\frac{1}{\mu(\eta Q)}\int_{Q}\left|f(x)\right|d\mu(x)\leq C_{3}\label{C3}
\end{equation}
for any cube $Q$ with $\ell(Q)>1$. We set
\begin{equation}
\left\|f\right\|_{\rbmo(\mu)}=\inf\left\{C_{3}:(\ref{C1}),(\ref{C2}) \text{ and }(\ref{C3}) \text{ hold}\right\}.
\end{equation}
\end{enumerate}
\end{defn}
We should have referred to the choice of constants $\eta,\rho$ and $\beta$ in the terminology, but it is proved in \cite{T1} and \cite{Y} that $\RBMO(\mu)$ and $\rbmo(\mu)$ are independent of their choice. We also have (see Proposition 2.5 of \cite{T1} and Proposition 2.2 of \cite{Y}) that $(\RBMO(\mu),\left\|\cdot\right\|_{\RBMO(\mu)})$ and $(\rbmo(\mu),\left\|\cdot\right\|_{\rbmo(\mu)})$ are Banach spaces of functions (modulo additive constants).
 
 We have that $S_{Q,R}\approx 1+\delta\left(Q,R\right)$ (see \cite{T2}), where
 \begin{equation}
 \delta\left(Q,R\right)=\max\left(\int_{Q_{R}\setminus Q}\frac{d\mu(x)}{\left|x-x_{Q}\right|^{n}},\int_{R_{Q}\setminus R}\frac{d\mu(x)}{\left|x-x_{R}\right|^{n}}\right),
 \end{equation}
 and there exits a constant $\kappa>0$ such that for all cubes $Q\subset R$ we have
 \begin{equation}
 \delta\left(Q,R\right)\leq \kappa\left(1+\log(\frac{\ell(R)}{\ell(Q)})\right).\label{controle1}
 \end{equation}
  
 \begin{lem}
Let $f\in RBMO(\mu)$ and $\varphi\in\mathcal D(\mathbb R^{d})$. Then the pointwise product $f\varphi\in RBMO(\mu)$. Moreover, if $f\in \rbmo(\mu)$ then $f\varphi\in \rbmo(\mu)$.
\end{lem}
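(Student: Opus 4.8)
The plan is to verify directly the defining conditions (\ref{C1}) and (\ref{C2}) for $f\varphi$ (and, for the local statement, the additional condition (\ref{C3})), exploiting the three special features of $\varphi\in\D(\bR^{d})$: it is bounded with $\|\varphi\|_{\infty}<\infty$, it is Lipschitz with $|\varphi(x)-\varphi(y)|\le\|\nabla\varphi\|_{\infty}|x-y|$, and it has compact support. Since $\RBMO(\mu)$ does not depend on the parameter $\eta$, I am free to fix $\eta$ as large as I wish, and I choose $\eta\ge 3\sqrt{d}$. I also fix once and for all an $(\rho,\beta_{\rho})$-doubling cube $R_{0}$ with $\supp\varphi\subset R_{0}$ and $\ell(R_{0})\ge 1$, whose existence is guaranteed by the abundance of large doubling cubes recalled above; we may assume $\supp\varphi\cap\supp\mu\neq\emptyset$, the statement being trivial otherwise, so that $\mu(R_{0})>0$. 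It is convenient to use the equivalent description of $\RBMO(\mu)$ from \cite{T1}: $f\in\RBMO(\mu)$ if and only if there is a family of constants $\{f_{Q}\}_{Q}$ and a constant $C$ with $\frac{1}{\mu(\eta Q)}\int_{Q}|f-f_{Q}|\,d\mu\le C$ for every cube $Q$ and $|f_{Q}-f_{R}|\le C\,S_{Q,R}$ for all cubes $Q\subset R$.

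For $f\varphi$ I would then propose the family of constants
\[
g_{Q}=\begin{cases} f_{Q}\,\varphi(x_{Q}) & \text{if }\ell(Q)\le\ell(R_{0}),\\[1mm] 0 & \text{if }\ell(Q)>\ell(R_{0}),\end{cases}
\]
and check the two conditions for these $g_{Q}$. For the mean oscillation condition with $\ell(Q)\le\ell(R_{0})$, I use the pointwise splitting
\[
f(x)\varphi(x)-f_{Q}\varphi(x_{Q})=(f(x)-f_{Q})\varphi(x)+f_{Q}\bigl(\varphi(x)-\varphi(x_{Q})\bigr),
\]
so that $\frac{1}{\mu(\eta Q)}\int_{Q}|f\varphi-g_{Q}|\,d\mu\le\|\varphi\|_{\infty}C+\|\nabla\varphi\|_{\infty}\sqrt{d}\,|f_{Q}|\,\ell(Q)$, the first term coming from the oscillation bound for $f$ and the second from the Lipschitz bound together with $\mu(Q)\le\mu(\eta Q)$. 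The whole point is then the estimate $|f_{Q}|\,\ell(Q)\lesssim 1$ for cubes meeting $\supp\varphi$ (when $Q$ misses $\supp\varphi$, both $\varphi$ and $\varphi(x_{Q})$ vanish on $Q$ and there is nothing to prove). This I obtain from the coefficient inequality for $f$ (after passing to a common enclosing cube of side $\approx\ell(R_{0})$) and from $S_{Q,R_{0}}\approx 1+\delta(Q,R_{0})\lesssim 1+\log(\ell(R_{0})/\ell(Q))$ by (\ref{controle1}), which give $|f_{Q}|\lesssim|f_{R_{0}}|+1+\log(\ell(R_{0})/\ell(Q))$; since $t\mapsto t\,(1+\log(\ell(R_{0})/t))$ is bounded on $(0,\ell(R_{0})]$, the claim follows. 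For cubes with $\ell(Q)>\ell(R_{0})$ I use instead the crude bound $\frac{1}{\mu(\eta Q)}\int_{Q}|f\varphi|\,d\mu\le\frac{\|\varphi\|_{\infty}}{\mu(\eta Q)}\int_{\supp\varphi}|f|\,d\mu$, which is finite because $f\in L^{1}_{loc}(\mu)$ and $\supp\varphi$ is compact; here the choice $\eta\ge 3\sqrt{d}$ forces $\eta Q\supset R_{0}$ for every such $Q$ meeting $\supp\varphi$, whence $\mu(\eta Q)\ge\mu(R_{0})>0$ and the quotient stays bounded.

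For the coefficient condition $|g_{Q}-g_{R}|\lesssim S_{Q,R}$ with $Q\subset R$, the cases where $Q,R$ lie on the same side of the threshold $\ell(R_{0})$ are handled by the corresponding inequality for $f$ and by the Lipschitz bound on $\varphi$ (using $S_{Q,R}\ge 1$ and the already established $|f_{R}|\ell(R)\lesssim 1$). The genuinely delicate case is $\ell(Q)\le\ell(R_{0})<\ell(R)$, where $g_{R}=0$ but $|g_{Q}|=|f_{Q}||\varphi(x_{Q})|$ can be as large as $\log(1/\ell(Q))$; what saves the estimate is the monotonicity $S_{Q,R_{0}}\le S_{Q,R}$ (immediate from the definition of $S$, since $N_{Q,R_{0}}\le N_{Q,R}$ when $\ell(R)\ge\ell(R_{0})$), which together with $|f_{Q}|\le|f_{R_{0}}|+C\,S_{Q,R_{0}}$ gives $|g_{Q}|\le\|\varphi\|_{\infty}(|f_{R_{0}}|+C)\,S_{Q,R}$. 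I expect this interplay to be the main obstacle: the averages $f_{Q}$ blow up logarithmically as $\ell(Q)\to 0$, while the corrective coefficients $S_{Q,R}$ grow at least as fast through the fixed cube $R_{0}$, and this is the one place where the non-doubling geometry, rather than mere boundedness of $\varphi$, is essential.

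Finally, the local statement is lighter. Condition (\ref{C3}) for $f\varphi$ is immediate from the same condition for $f$, since $|f\varphi|\le\|\varphi\|_{\infty}|f|$ yields $\frac{1}{\mu(\eta Q)}\int_{Q}|f\varphi|\,d\mu\le\|\varphi\|_{\infty}C_{3}$ for every cube with $\ell(Q)>1$. The remaining conditions (\ref{C1}) and (\ref{C2}) for $\rbmo(\mu)$ only need to be checked for cubes with $\ell(Q)\le 1$, so, choosing $\ell(R_{0})\ge 1$, only the first regime above occurs and no switching of constants is required; the estimates then go through verbatim with $\|f\|_{\rbmo(\mu)}$ in place of $\|f\|_{\RBMO(\mu)}$.
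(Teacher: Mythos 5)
Your proposal is correct and follows essentially the same route as the paper's proof: both verify the two defining conditions of $\RBMO(\mu)$ for an explicit family of constants that vanishes on large cubes and equals (an average of) $f$ times (a value or average of) $\varphi$ on small cubes meeting the support, with the two key points being that the logarithmic growth $|f_{Q}|\lesssim 1+\log(\ell(R_{0})/\ell(Q))$ is absorbed by the factor $\ell(Q)$ coming from the Lipschitz bound on $\varphi$, and that in the regularity condition the crossing case is controlled by the monotonicity/comparability of $S_{Q,R}$ through the fixed support cube. The only differences are cosmetic (you use $\varphi(x_{Q})$ and a threshold on $\ell(Q)$ where the paper uses $\varphi_{Q}$ and containment in $2Q_{0}$, and the paper normalizes $f_{\tilde{2Q_{0}}}=0$ to make the large-cube estimate quantitative, whereas your cruder bound there yields only membership, which is all the lemma asserts).
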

\begin{proof}
Let $f\in RBMO(\mu)$ and $\varphi\in\mathcal D(\mathbb R^{d})$ with support in the cube $Q_{0}$. We assume without loss of generality that $f_{\tilde{2Q_{0}}}=0$.
The point wise product $f\varphi$ belongs to $RBMO(\mu)$ if and only if for some real number $\rho>1$, there exists $C>0$ and a collection of numbers $\left\{C_{Q}(f\varphi)\right\}_{Q}$ (i.e for each cube Q, there exists $C_{Q}(f\varphi)\in\mathbb R$) such that
\begin{equation}
\int_{Q}\left|(f\varphi)(x)-C_{Q}(f\varphi)\right|d\mu(x)\leq C\label{relation1}
\end{equation}
and
\begin{equation}
\left|C_{Q}(f\varphi)-C_{R}(f\varphi)\right|\leq CS_{Q,R}\text{  for any two cubes }Q\subset R.\label{relation2}
\end{equation}

\medskip
{A-\it The choice of the numbers $C_{Q}(f\varphi)$ satisfying (\ref{relation1})}

\medskip

Let $Q$ be a cube in $\mathbb R^{d}$. If
\begin{enumerate}
\item $\mu(Q\cap Q_{0})=0$, or\label{cas1}
\item  $\mu(Q\cap Q_{0})>0$ and $Q\not\subset 2Q_{0}$\label{cas2}
\end{enumerate}
 then we take $C_{Q}(f\varphi)=0$. In the case (\ref{cas1}) we have $\int_{Q}\left|f\varphi\right|d\mu=0$ while in the case (\ref{cas2})
we have $Q_{0}\subset 5 Q$ so that 
$$\int_{Q}\left|f\varphi\right|d\mu=\int_{Q\cap Q_{0}}\left|f\varphi\right|d\mu\leq\int_{Q_{0}}\left|f\varphi\right|d\mu\leq C\left\|\varphi\right\|_{L^{\infty}}\left\|f\right\|_{RBMO(\mu)}\mu(\rho Q).$$
for any $\rho>5$.
 We suppose now that $\mu(Q\cap Q_{0})>0$ and $Q\subset 2Q_{0}$. 

We put $C_{Q}(f\varphi)=f_{\tilde{Q}}\varphi_{Q}$. It follows that
\begin{eqnarray*}
\int_{Q}\left|f\varphi-f_{\tilde{Q}}\varphi_{Q}\right|d\mu&=&\int_{Q}\left|(f-f_{\tilde{Q}})\varphi+f_{\tilde{Q}}(\varphi-\varphi_{Q})\right|d\mu\\
&\leq&\left\|\varphi\right\|_{L^{\infty}}\left\|f\right\|_{RBMO(\mu)}\mu(\rho Q)+\left|f_{\tilde{Q}}\right|\int_{Q}\left|\varphi-\varphi_{Q}\right|d\mu.
\end{eqnarray*}

 But 
 \begin{eqnarray*}
\left|f_{\tilde{Q}}\right|&=&\left|f_{\tilde{Q}}-f_{\tilde{2Q_{0}}}\right|\leq S_{Q,2Q_{0}}\left\|f\right\|_{RBMO(\mu)}\\
&\leq&C(1+\delta_{(Q,2Q_{0})})\left\|f\right\|_{RBMO(\mu)}\leq C(1+\log(\frac{2\ell(Q_{0})}{\ell(Q)}))\left\|f\right\|_{RBMO(\mu)},
\end{eqnarray*}
according to Lemma 2.4 of \cite {T2}. So that taking into consideration the following classical result
$$\int_{Q}\left|\varphi-\varphi_{Q}\right|d\mu\leq C\left\|\nabla\varphi\right\|_{L^{\infty}}\ell(Q)\mu(Q)$$
and the fact that $2\ell(Q_{0})\geq\ell(Q)$, we obtain 
\begin{eqnarray*}
\left|f_{\tilde{Q}}\right|\int_{Q}\left|(\varphi-\varphi_{Q})\right|d\mu&\leq& C(1+\log(\frac{2\ell(Q_{0})}{\ell(Q)}))\ell(Q)\mu(Q)\left\|f\right\|_{RBMO(\mu)}\\
&\leq&C\mu(Q)\left\|f\right\|_{RBMO(\mu)}.
\end{eqnarray*} 

\medskip
{B-\it Prove that the collection satisfy (\ref{relation2})}

\medskip

Let $Q\subset R$ be two cubes. If $R\cap Q_{0}=\emptyset$ or $Q\not\subset 2Q_{0}$, then $C_{Q}(f\varphi)=C_{R}(f\varphi)=0$. Thus there is nothing to prove. 

We suppose that $R\cap Q_{0}\neq\emptyset$ and $Q\subset 2Q_{0}$. 

If $R\not\subset 2Q_{0}$ then $C_{R}(f\varphi)=0$ and $Q_{0}\subset 5R$, so that
\begin{eqnarray*}
\left|f_{\tilde{Q}}\varphi_{Q}\right|&\leq&\left\|\varphi\right\|_{L^{\infty}}\left|f_{\tilde{Q}}-f_{\tilde{2Q_{0}}}\right|\leq\left\|\varphi\right\|_{L^{\infty}}S_{Q,2Q_{0}}\left\|f\right\|_{RBMO(\mu)}\\
&\leq&\left\|\varphi\right\|_{L^{\infty}}S_{Q,10R}\left\|f\right\|_{RBMO(\mu)}\leq C\left\|\varphi\right\|_{L^{\infty}}S_{Q,R}\left\|f\right\|_{RBMO(\mu)}.
\end{eqnarray*}
If $R\subset 2Q_{0}$, then 
\begin{eqnarray*}
\left|C_{R}(f\varphi)-C_{Q}(f\varphi)\right|&=&\left|f_{\tilde{R}}\varphi_{R}-f_{\tilde{Q}}\varphi_{Q}\right|\leq\left\|\varphi\right\|_{L^{\infty}}\left|f_{\tilde{R}}-f_{\tilde{Q}}\right|+\left|f_{\tilde{R}}\right|\left|\varphi_{R}-\varphi_{Q}\right|\\
&\leq&\left\|\varphi\right\|_{L^{\infty}}S_{Q,R}\left\|f\right\|_{RBMO(\mu)}+\left|f_{\tilde{R}}\right|\left|\varphi_{R}-\varphi_{Q}\right|.
\end{eqnarray*}
Let us estimate the second term.
\begin{eqnarray*}
\left|f_{\tilde{R}}\right|\left|\varphi_{R}-\varphi_{Q}\right|&\leq&C\left|f_{\tilde{R}}\right|\left(\ell(R)+\ell(Q)+\mathrm{dist}(x_{Q},x_{R})\right)\\
&\leq&C(1+\left|f_{\tilde{R}}\right|\mathrm{dist}(x_{Q},x_{R})),
\end{eqnarray*}
where $x_{Q}$ and $x_{R}$ denote the centers of the cubes $Q$ and $R$ respectively. 
But $\mathrm{dist}(x_{Q},x_{R})\leq C\ell(Q_{R})$ and $\left|f_{\tilde{R}}\right|\leq\left|f_{\tilde{Q_{R}}}\right|+\left|f_{\tilde{Q_{R}}}-f_{\tilde{R}}\right|$, which leads to 
\begin{eqnarray*}
\left|f_{\tilde{R}}\right|\mathrm{dist}(Q,R)&\leq& C\ell(Q_{R})\left(\left|f_{\tilde{Q_{R}}}\right|+\left|f_{\tilde{Q_{R}}}-f_{\tilde{R}}\right|\right)\\
&\leq&C\left\|f\right\|_{RBMO(\mu)}+S_{R,Q_{R}}\left\|f\right\|_{RBMO(\mu)}\leq C\left\|f\right\|_{RBMO(\mu)}.
\end{eqnarray*}
The result follow.
\medskip

Let us consider know the particular case where  $f$ belongs to $\rbmo(\mu)$. For any cube $Q$ such that $\ell(Q)>1$, we have
$$\left|C_{Q}(f\varphi)\right|\leq\left|f_{\tilde{Q}}\right|\left|\varphi_{Q}\right|\leq\left\|\varphi\right\|_{L^{\infty}}\left\|f\right\|_{\rbmo(\mu)}\mu(\eta \tilde{Q})/\mu(\tilde{Q})\leq C\left\|\varphi\right\|_{L^{\infty}(\mu)}\left\|f\right\|_{\rbmo(\mu)}$$
for some positive constant $C$ and fixed $1<\eta\leq\rho$, since $\ell(\tilde{Q})\geq\ell(Q)$. It follows that $f\varphi\in\rbmo(\mu)$.
\end{proof}  
  
 Inequalities of John-Nirenberg type are  valid in both spaces. More  precisely we have 
 \begin{thm}\cite{T1}
 Let $f\in\RBMO(\mu)$. For any cube $Q$ and any $\lambda>0$, we have 
 \begin{equation}
 \mu\left(\left\{x\in Q:\left|f(x)-f_{\tilde{Q}}\right|>\lambda\right\}\right)\leq C_{4}\mu(\rho Q)\exp\left(-\frac{C_{5}\lambda}{\left\|f\right\|_{\RBMO(\mu)}}\right),
 \end{equation}
 where the constants $C_{4}>0$ and $C_{5}>0$ depend only on $\rho>1$
 \end{thm}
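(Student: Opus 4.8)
\textit{Proof sketch.} The plan is to deduce the estimate from a self-improving recursion for the distribution function, obtained through a Calder\'on--Zygmund stopping-time decomposition built from doubling cubes. Normalizing so that $\left\|f\right\|_{\RBMO(\mu)}=1$, and using that $\tilde{Q}$ is $(\rho,\beta_{\rho})$-doubling together with the bound $\delta(Q,\tilde{Q})\leq C$ coming from the abundance of doubling cubes, it suffices to control the normalized quantity
\[
A(\lambda)=\sup_{Q}\frac{1}{\mu(\rho Q)}\,\mu\!\left(\left\{x\in Q:\left|f(x)-f_{\tilde{Q}}\right|>\lambda\right\}\right),
\]
the supremum being over all cubes $Q$. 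I would show that there exist a large constant $B>1$, a ratio $\theta\in(0,1)$, and a shift $\Lambda=O(B)$, all depending only on $\rho$ and the constant $C_{0}$ in (\ref{nondoubling}), such that $A(\lambda+\Lambda)\leq\theta\,A(\lambda)$ for every $\lambda\geq 0$. Iterating gives $A(k\Lambda)\leq\theta^{k}A(0)\leq\theta^{k}$, and reading off the decay between consecutive multiples of $\Lambda$ yields the theorem with $C_{5}=\Lambda^{-1}\log(1/\theta)>0$ and a suitable $C_{4}$; for $\lambda$ in a bounded range the inequality is trivial after enlarging $C_{4}$.

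To produce the recursion, fix a cube $Q$ and set $g=f-f_{\tilde{Q}}$, so that $\int_{Q}|g|\,d\mu\leq\mu(\eta Q)\leq C\mu(\rho Q)$ by (\ref{C1}). I would then select the family $\{Q_{i}\}$ of \emph{doubling} subcubes of $Q$ that are maximal subject to
\[
\frac{1}{\mu(\rho Q_{i})}\int_{Q_{i}}|g|\,d\mu>B.
\]
Since the selected cubes may be taken pairwise disjoint, the selection condition together with $\int_{Q}|g|\,d\mu\leq C\mu(\rho Q)$ gives the packing bound $\sum_{i}\mu(\rho Q_{i})\leq (C/B)\,\mu(\rho Q)$, so that $\theta:=C/B<1$ once $B$ is large. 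By Lebesgue differentiation along doubling cubes one also has $|g|\leq\beta_{\rho}B$ $\mu$-almost everywhere off $\bigcup_{i}Q_{i}$, whence, up to a null set, $\{x\in Q:|g|>\lambda+\Lambda\}\subseteq\bigcup_{i}Q_{i}$ as soon as $\Lambda\geq\beta_{\rho}B$.

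The decisive step is to pass, on each selected cube, from the reference constant $f_{\tilde{Q}}$ to the one adapted to $Q_{i}$. Since each $Q_{i}$ is doubling we have $\tilde{Q}_{i}=Q_{i}$, and the inclusion
\[
\left\{x\in Q_{i}:\left|f-f_{\tilde{Q}}\right|>\lambda+\Lambda\right\}\subseteq\left\{x\in Q_{i}:\left|f-f_{Q_{i}}\right|>\lambda\right\}
\]
holds provided the jump $|f_{Q_{i}}-f_{\tilde{Q}}|$ is at most $\Lambda$. I would bound it by $|f_{Q_{i}}-f_{\hat{Q}_{i}}|+|f_{\hat{Q}_{i}}-f_{\tilde{Q}}|$, where $\hat{Q}_{i}$ is the smallest doubling cube strictly containing $Q_{i}$: the first term is $\leq S_{Q_{i},\hat{Q}_{i}}\leq C$ by (\ref{C2}) applied to the doubling pair $Q_{i}\subset\hat{Q}_{i}$, the last bound because $S_{Q_{i},\hat{Q}_{i}}\approx 1+\delta(Q_{i},\hat{Q}_{i})$ and $\hat{Q}_{i}$ sits only a bounded $\delta$-distance above $Q_{i}$ by the abundance of doubling cubes and (\ref{controle1}); the second term is $\leq\beta_{\rho}B$ by the maximality of $Q_{i}$ and the doubling of $\hat{Q}_{i}$. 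Taking $\Lambda=\beta_{\rho}B+C$ then validates the inclusion, and summing over $i$ against the packing bound closes the recursion $A(\lambda+\Lambda)\leq\theta A(\lambda)$. The main obstacle is precisely this jump control: in the classical doubling setting the maximality of the stopping cubes forces $|f_{Q_{i}}-f_{Q}|\lesssim B$ at once, whereas here the ratios $\mu(\rho\hat{Q}_{i})/\mu(\rho Q_{i})$ are unbounded in general, and one is obliged to run the stopping time over doubling cubes and to absorb the passage between consecutive doubling scales into the coefficient $S_{Q,R}$ through (\ref{controle1}). \epf
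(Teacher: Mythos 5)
The paper itself contains no proof of this theorem: it is quoted from Tolsa \cite{T1}, so the only meaningful comparison is with Tolsa's argument. Your sketch follows essentially that route --- a good-$\lambda$ recursion $A(\lambda+\Lambda)\leq\theta A(\lambda)$ driven by a Calder\'on--Zygmund stopping time run over \emph{doubling} cubes, with the jump $|f_{Q_i}-f_{\tilde{Q}}|$ controlled by passing through the next doubling scale $\hat{Q}_i$, using failure of the selection condition there and the regularity condition (\ref{C2}) together with $\delta(Q_i,\hat{Q}_i)\leq C$. These are the right ideas and they are the ones in \cite{T1}.

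Two steps are genuinely incomplete as written. First, ``the selected cubes may be taken pairwise disjoint'' is false: maximal-for-inclusion cubes in a family of arbitrary closed cubes can overlap badly, and without disjointness (or bounded overlap) the packing bound $\sum_i\mu(\rho Q_i)\leq (C/B)\,\mu(\rho Q)$ does not follow from $\sum_i\int_{Q_i}|g|\,d\mu\leq C\int_Q|g|\,d\mu$. This is precisely where Tolsa's Calder\'on--Zygmund decomposition invokes the Besicovitch covering theorem to extract a subfamily with bounded overlap (which suffices, at the cost of a dimensional factor in $\theta$); the existence of maximal stopping cubes also requires an argument. Second, the bound $\delta(Q_i,\hat{Q}_i)\leq C$ cannot come from (\ref{controle1}): that inequality controls $\delta$ by the logarithm of the side-length ratio, and $\ell(\hat{Q}_i)/\ell(Q_i)$ is unbounded for non-doubling measures. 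The correct reason is quantitative: all intermediate dilations of $Q_i$ below $\hat{Q}_i$ are non-doubling, so their measures decay geometrically relative to $\mu(\hat{Q}_i)$ because $\beta_\rho$ exceeds the $n$-th power of the dilation factor, and the sum defining $S_{Q_i,\hat{Q}_i}$ (equivalently $\delta(Q_i,\hat{Q}_i)$) is then bounded independently of the number of scales; this is Tolsa's lemma on $\delta(Q,\tilde{Q})\leq C$, which you state at the outset but then misattribute. Finally, when $\hat{Q}_i\not\subset Q$ the maximality of $Q_i$ gives no information about $\hat{Q}_i$, so a boundary adjustment (running the selection inside a dilate of $Q$, or accepting $\mu(CQ)$ on the right and removing the enlargement afterwards) is needed. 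None of these defects invalidates the strategy --- they are exactly the technical points that \cite{T1} settles --- but each must be addressed for the proof to close.
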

  As we can see in Theorem 2.6 of \cite{Y}, one can replace in the previous theorem the space $\RBMO(\mu)$ by its local version $\rbmo(\mu)$ provided the cube $Q$ satisfies $\ell(Q)\leq 1$, while for cubes $Q$ such that $\ell(Q)>1$ we have $ \mu\left(\left\{x\in Q:\left|f(x)\right|>\lambda\right\}\right)\leq C_{4}\mu(\rho Q)\exp\left(-\frac{C_{5}\lambda}{\left\|f\right\|_{\rbmo(\mu)}}\right)$.  An immediate consequence of this result is that there exists a non-negative constant $C_{6}$, which can be chosen as big as we like, such that for all cube $Q$ and $\textrm{const}\equiv\!\!\!\!\!\!/\; f\in RBMO(\mu)$, 
\begin{equation}
\frac{1}{\mu\left(\rho Q\right)}\int_{Q}\exp\left(\frac{\left|f-f_{\tilde{Q}}\right|}{C_{6}\left\|f\right\|_{\RBMO(\mu)}}\right)d\mu\leq1.
\end{equation}
 We also have the following: 

\begin{lem}
Let  $\textrm{const}\equiv\!\!\!\!\!\!/\; f\in
RBMO(\mu)$ and $\mathbb Q$ the unit cube.
We have
\begin{equation}
\int_{\bR^d}\frac{\left(\exp\left(\frac{\left|f(x)-f_{\tilde{\mathbb Q}}\right|}{k}\right)-1\right)d\mu(x)}{(1+|x|)^{2n+\kappa}}\leq 1
\end{equation}
where $k= C_7 \left\|f\right\| _{RBMO(\mu)}$.
\end{lem}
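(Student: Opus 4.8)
The plan is to combine the exponential integrability estimate recorded just above with a dyadic decomposition of $\bR^d$ around $\mathbb{Q}$; keeping the subtracted $1$ will be essential to reach the \emph{sharp} constant $1$ on the right-hand side. Set $A_0=\mathbb{Q}$ and $A_m=2^m\mathbb{Q}\setminus 2^{m-1}\mathbb{Q}$ for $m\ge 1$, so that $\bR^d=\bigcup_{m\ge 0}A_m$ and, since $\ell(\mathbb{Q})=1$, one has $(1+|x|)^{-(2n+\kappa)}\le C\,2^{-m(2n+\kappa)}$ on $A_m$. On each piece $A_m\subset 2^m\mathbb{Q}$ I split $|f(x)-f_{\tilde{\mathbb{Q}}}|\le |f(x)-f_{\widetilde{2^m\mathbb{Q}}}|+|f_{\widetilde{2^m\mathbb{Q}}}-f_{\tilde{\mathbb{Q}}}|$, handling the local oscillation by the John--Nirenberg estimate on the cube $2^m\mathbb{Q}$ and the cross-scale term by the $\RBMO$ structure.

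For the cross-scale term I use, exactly as in the proof of the preceding lemma, that $|f_{\widetilde{2^m\mathbb{Q}}}-f_{\tilde{\mathbb{Q}}}|\le C\,S_{\mathbb{Q},2^m\mathbb{Q}}\left\|f\right\|_{RBMO(\mu)}$; with $S_{Q,R}\approx 1+\delta(Q,R)$, the bound (\ref{controle1}), and $\ell(2^m\mathbb{Q})/\ell(\mathbb{Q})=2^m$ this gives
$$D_m:=\frac{|f_{\widetilde{2^m\mathbb{Q}}}-f_{\tilde{\mathbb{Q}}}|}{k}\le \frac{C(1+m)\left\|f\right\|_{RBMO(\mu)}}{C_7\left\|f\right\|_{RBMO(\mu)}}=\frac{C(1+m)}{C_7}.$$
Writing $g=|f-f_{\widetilde{2^m\mathbb{Q}}}|/\left\|f\right\|_{RBMO(\mu)}$ and using $\exp(|f-f_{\tilde{\mathbb{Q}}}|/k)\le e^{D_m}\exp(g/C_7)$, I expand
$$e^{D_m}\exp(g/C_7)-1=e^{D_m}\big(\exp(g/C_7)-1\big)+\big(e^{D_m}-1\big),$$
both summands being nonnegative. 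The first is treated by the convexity inequality $e^{\theta s}-1\le \theta(e^s-1)$ (valid for $\theta\in[0,1]$, $s\ge 0$): with $\theta=C_6/C_7\le 1$ and $s=g/C_6$ it yields $\exp(g/C_7)-1\le (C_6/C_7)\big(\exp(g/C_6)-1\big)$, whence, by the exponential estimate applied to $2^m\mathbb{Q}$ and the growth condition (\ref{nondoubling}),
$$\int_{2^m\mathbb{Q}}\big(\exp(g/C_7)-1\big)\,d\mu\le \frac{C_6}{C_7}\int_{2^m\mathbb{Q}}\exp\!\Big(\tfrac{|f-f_{\widetilde{2^m\mathbb{Q}}}|}{C_6\left\|f\right\|_{RBMO(\mu)}}\Big)d\mu\le \frac{C_6}{C_7}\,\mu(\rho 2^m\mathbb{Q})\le \frac{C_6 C_0\rho^n}{C_7}2^{mn}.$$
For the second summand I use $e^{D_m}-1\le D_m e^{D_m}$ and $\mu(2^m\mathbb{Q})\le C_0 2^{mn}$, so both contributions carry the decisive factor $1/C_7$.

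Collecting the pieces, the integral over $A_m$ is bounded by $C\,2^{-m(2n+\kappa)}\cdot 2^{mn}\cdot (1+m)\,e^{D_m}/C_7=C(1+m)e^{D_m}2^{-m(n+\kappa)}/C_7$. The only remaining point is the exponential growth of $e^{D_m}$ in $m$; but taking $C_7$ large (which is permitted, exactly as $C_6$ may be taken as large as we like) one arranges $e^{D_m}\le C\,2^{m\kappa/2}$ for all $m\ge 0$, after which
$$I:=\int_{\bR^d}\frac{\exp(|f-f_{\tilde{\mathbb{Q}}}|/k)-1}{(1+|x|)^{2n+\kappa}}\,d\mu\le \frac{C}{C_7}\sum_{m\ge 0}(1+m)\,2^{-m(n+\kappa/2)}\le \frac{C'}{C_7},$$
the series converging since $n+\kappa/2>0$ and $C'$ depending only on $n,\kappa,C_0,\rho,C_6$. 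Taking $C_7$ (hence $k=C_7\left\|f\right\|_{RBMO(\mu)}$) large enough then forces $I\le 1$. I expect the main obstacle to be precisely this interplay: the cross-scale mean differences produce a constant $e^{D_m}$ that grows exponentially in the annulus index, and one must use the freedom in the John--Nirenberg constant both to dominate that growth by the geometric decay $2^{-m(n+\kappa)}$ supplied by the weight and the measure, and --- through the $e^{\theta s}-1\le\theta(e^s-1)$ trick applied to the subtracted $1$ --- to manufacture the small prefactor that brings the total down to the sharp value $1$.
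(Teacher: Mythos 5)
Your proof is correct and follows essentially the same route as the paper's: dyadic annuli around $\mathbb{Q}$, the John--Nirenberg exponential estimate on each dilated cube, the bound $S_{\mathbb{Q},2^m\mathbb{Q}}\lesssim 1+m$ for the cross-scale mean differences, and the freedom to enlarge the constant so as to force the total below $1$. If anything you are slightly more careful than the paper, which silently drops the $(e^{D_m}-1)$ term and compresses the final renormalization into the choice $C_7=\max(C_6,K_1C_6)$, whereas your explicit use of $e^{\theta s}-1\le\theta(e^s-1)$ makes that last step transparent.
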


\begin{proof}
Let $f\in\RBMO(\mu)$ with $\left\|f\right\|_{\RBMO(\mu)}\neq 0$. We have
\begin{equation*}
\int_{\mathbb R^{d}}\frac{e^{\frac{\left|f(x)-f_{\tilde{\mathbb Q}}\right|}{C_{6}\left\|f\right\|_{\RBMO(\mu)}}}-1}{\left(1+\left|x\right|\right)^{2n+\kappa}}d\mu(x)=\int_{\mathbb Q}\frac{e^{\frac{\left|f(x)-f_{\tilde{\mathbb Q}}\right|}{C_{6}\left\|f\right\|_{\RBMO(\mu)}}}-1}{\left(1+\left|x\right|\right)^{2n+\kappa}}d\mu(x)+\int_{\mathbb Q^{c}}\frac{e^{\frac{\left|f(x)-f_{\tilde{\mathbb Q}}\right|}{C_{6}\left\|f\right\|_{\RBMO(\mu)}}}-1}{\left(1+\left|x\right|\right)^{2n+\kappa}}d\mu(x),
\end{equation*}
where $\mathbb Q^{c}=\mathbb R^{d}\setminus \mathbb Q$. The first term in the right hand side is less that $\mu(\rho\mathbb Q)$. For the second term, we have
\begin{eqnarray*}
\int_{\mathbb Q^{c}}\frac{e^{\frac{\left|f(x)-f_{\tilde{\mathbb Q}}\right|}{C_{6}\left\|f\right\|_{\RBMO(\mu)}}}-1}{\left(1+\left|x\right|\right)^{2n+\kappa}}d\mu(x)&=&\sum^{\infty}_{k=0}\int_{2^{k+1}\mathbb Q\setminus2^{k}\mathbb Q}\frac{e^{\frac{\left|f(x)-f_{\tilde{\mathbb Q}}\right|}{C_{6}\left\| f\right\|_{\RBMO(\mu)}}}-1}{\left(1+\left|x\right|\right)^{2n+\kappa}}d\mu(x)\\
&\leq&C\sum^{\infty}_{k=0}2^{-(2n+\kappa) k}\int_{2^{k+1}\mathbb Q}\left(e^{\frac{\left|f(x)-f_{\tilde{\mathbb Q}}\right|}{C_{6}\left\|f\right\|_{\RBMO(\mu)}}}-1\right)d\mu(x).
\end{eqnarray*} 
 Furthermore, there exists a non-negative constant $K$ such that 
 \begin{equation}
\left|f_{\tilde{R}}-f_{\tilde{Q}}\right|\leq KS_{Q,R}\left\|f\right\|_{RBMO(\mu)}\text{ for two cubes }Q\subset R,\label{relationQR}
\end{equation}
as we can see in the proof of Lemma 2.8 in \cite {T1}. We also have $S_{\mathbb Q,2^{k+1}\mathbb Q}\leq (k+2)$, which leads to $\left|f_{\tilde{\mathbb Q}}-f_{\tilde{2^{k+1}\mathbb Q}}\right|\leq \log(2^{\frac{K}{\log 2}(k+2)})\left\|f\right\|_{\RBMO(\mu)}$.

Hence
\begin{equation*}
\begin{aligned}
&\sum^{\infty}_{k=0}2^{-(2n+\kappa) k}\int_{2^{k+1}\mathbb Q}\left(e^{\frac{\left|f(x)-f_{\tilde{\mathbb Q}}\right|}{C_{6}\left\|f\right\|_{\RBMO(\mu)}}}-1\right)d\mu(x)\\
&\ \ \ \ \ \ \ \ \ \ \ \ \ \ \leq\sum^{\infty}_{k=0}2^{-(2n+\kappa) k}2^{\frac{K}{C_{6}\log 2}(k+2)}\int_{2^{k+1}\mathbb Q}\left(e^{\frac{\left|f(x)-f_{\tilde{2^{k+1}\mathbb Q}}\right|}{C_{6}\left\|f\right\|_{\RBMO(\mu)}}}-1\right)d\mu(x)\\
&\ \ \ \ \ \ \ \ \ \ \ \ \ \ \leq C\sum^{\infty}_{k=0}2^{(-n-\kappa+\frac{K}{C_{6}\log 2})k}.
\end{aligned}
\end{equation*}
If we choose $C_{6}>\frac{K}{(n+\kappa)\log 2}$ then the above series converges. Finally we have 
\begin{equation}
\int_{\mathbb R^{d}}\frac{e^{\frac{\left|f(x)-f_{\tilde{\mathbb Q}}\right|}{C_{6}\left\|f\right\|_{\RBMO(\mu)}}}-1}{\left(1+\left|x\right|\right)^{2n+\kappa}}d\mu(x)\leq K_{1},
\end{equation}
where $K_{1}$ is a non-negative constant not depending on $f$.

 Thus the result follows from taking $C_{7}=\max(C_{6},K_{1}C_{6})$.
\end{proof}

\section{Some properties of Orlicz and Hardy-Orlicz space}\label{hardy-orlicz}
For the definition of Hardy-Orlicz space, we need the maximal characterization of $\H^{1}(\mu)$ given in \cite{T2}. 

Let $f\in L^{1}_{loc}(\mu)$, we set
 \begin{equation}
 \M f(x)=\sup_{\varphi\in F(x)}\left|\int_{\bR^{d}}f\varphi d\mu\right|,
 \end{equation}
where for $x\in\bR^{d}$, $F(x)$ is the set of $\varphi\in L^{1}(\mu)\cap\mathcal C^{1}(\bR^{d})$ satisfying the following conditions: 
\begin{equation}
	 \left\|\varphi\right\|_{L^{1}(\mu)}\leq 1,\label{eqa1}
	\end{equation}
	\begin{equation}
	0\leq\varphi(y)\leq\frac{1}{\left|y-x\right|^{n}}\text{ for all }y\in\bR^{d}\label{eqa2}
	\end{equation}
	and
	\begin{equation}
	\left|\nabla\varphi(y)\right|\leq\frac{1}{\left|y-x\right|^{n+1}}\text{ for all }y\in\bR^{d}.\label{eqa3}
	\end{equation}
	Tolsa proved in Theorem 1.2 of \cite{T2} that a function  $f\in L^{1}(\mu)$ belongs to the Hardy space $\H^{1}(\mu)$ if and only if $\int_{\bR^{d}}fd\mu=0$ and $\M f\in L^{1}(\mu)$. Moreover, in this case we have
 \begin{equation} \left\|f\right\|_{\H^{1}(\mu)}\approx\left\|f\right\|_{L^{1}(\mu)}+\left\|\M f\right\|_{L^{1}(\mu)}.\label{norm1}
 \end{equation}
 
 Hardy-Orlicz spaces are defined via this maximal characterization. We recall that for a continuous function ${\mathcal P} :[0,\infty)\rightarrow [0, \infty)$ increasing from zero to infinity (but not necessarily convex), the Orlicz space $L^{\mathcal P} (\mu)$ consists of $\mu$-measurable functions
$f:\Omega\rightarrow {\mathbb R}$ such that
\begin{equation}
\left\|f\right\|_{L^{\mathcal P} (\mu)}:=\inf \left\{k >0: \int_{\bR^{d}} {\mathcal P}
 \left( k^{-1} \left|f\right|\right)d \mu \leq 1\right\}
<\infty.
\end{equation}
 In general, the nonlinear functional $\left\|\cdot\right\| _ {L^{\mathcal P}(\mu)}$ need not satisfy the triangle inequality. It is well known that $L^ {\mathcal P} (\mu)$ is a complete linear metric space, see \cite{RR}. The $ L^\mathcal P$-distance between $f$ and $g$ is given by 
 \begin{equation}\label{eq1.16bis}
\textrm{dist}_\mathcal P[f,g] :=\inf \left\{\rho >0:
\int_{\bR^{d}} {\mathcal P}
 \left(\rho^{-1}|f-g|\right)d \mu\leq\rho\right\}
<\infty\,.
\end{equation}

\smallskip

The Hardy-Orlicz space $\H^{\mathcal P}(\mu)$ consists of local integrable function  $f$ such that $\M f\in L^ {\mathcal P} (\mu)$. We
put
\begin{equation}
\left\|f\right\|_{\H^ {\mathcal P}(\mu)}=\left\|\M f\right\|_{L^{\mathcal P} (\mu)}.
\end{equation}
It comes from what precede that $\H^\mathcal P(\mu)$ is  a complete linear metric space, a Banach space when ${\mathcal P}$ is convex. These spaces
have previously been dealt with by many authors, see \cite{BoM,Ja2, St} and further references given there.
When we consider the Orlicz function $\wp(t)=\dfrac{t}{\log(e+t)}$, we have the following results given in \cite{BIJZ}.

$\bullet$ If $\textrm{dist}_{\wp}[f,g]\leq1$ then $\left\| f-g\right\|_{L^{\wp}(\mu)}\leq \textrm{dist}_{\wp}[f,g]\leq1$, 

$\bullet$ The sequence $(f_{j})_{j>0}$ converges to $f$ in $L^\wp(\mu)$ if and only if $\left\| f_j-f\right\|_{L^{\wp}(\mu)} \rightarrow 0$,

$\bullet$ We have duality between the Orlicz space $L^{\Xi} (\mu)$ associated to the Orlicz function $ {\Xi}(t)=e^t-1$ and $L^{\tilde{\wp}}(\mu)$ with $\tilde{\wp}(x)=x\log(e+x)$ in the sense that
for $f\in L^{\Xi} (\mu)$ and $g\in L^{\tilde{\wp}}(\mu)$ we have 
\begin{equation}
\left\|fg\right\|_{L^{1}(\mu)}\leq\left\|f\right\|_{L^{\Xi} (\mu)}\left\|g\right\|_{L^{\tilde{\wp}}(\mu)}.
\end{equation}
$\bullet$ For $f,g\in L^{\wp}(\mu)$,we have the following substitute of the additivity
\begin{equation}
\left\|f+g\right\|_{L^{\wp}(\mu)}\leq 4\left\|f\right\|_{L^{\wp}(\mu)}+4\left\|g\right\|_{L^{\wp}(\mu)}.\label{additivity}
\end{equation}
$\bullet$ Let 
\begin{equation}
d\sigma=\frac{d\mu}{(1+|x|)^{2n+\kappa}}\text{  and  } d\nu=\frac{d\mu}{\log(e+\left|x\right|)},
\end{equation}
for  $f\in L^{\Xi} (\sigma)$ and $g\in L^{1}(\mu)$, we have $fg\in L^\wp(\nu)$ and 
\begin{equation}
\left\|fg\right\|_{L^{\wp}(\nu)}\leq C\left\|f\right\|_{L^{\Xi} (\sigma)}\left\|g\right\|_{L^{1}(\mu)}.
\end{equation}
and for  $f\in \RBMO(\mu)$ and $g\in L^{1}(\mu)$,
\begin{equation}
\left\|fg\right\|_{L^{\wp}(\nu)}\leq C\left\|f\right\|_{\RBMO(\mu)^{+}}\left\|g\right\|_{L^{1}(\mu)},\label{holder3}
\end{equation}
where $\left\|f\right\|_{\RBMO^{+}(\mu)}=\left\|f\right\|_{\RBMO(\mu)}+\left|f_{\tilde{\mathbb Q}}\right|$
\section{Proof of the main results}
\begin{proof}[{\bf Proof of Theorem \ref{main1}}]
Let $f\in\RBMO(\mu)$ and $h\in\H^{1}(\mu)$, $h$ having the $p$-atomic blocks decomposition given in (\ref{atomicblockdecompo}), i.e. 
\begin{equation}
h=\sum_{j} b_{j},
\end{equation}
where $b_{j}=\sum^{\infty}_{i=1}\lambda_{ij}a_{ij}$ is the atomic-block supported in the cube $R_{j}$, $a_{ij}$ supported in the cube $Q_{ij}\subset R_{j}$ and $\left\|a_{ij}\right\|_{L^{\infty}(\mu)}\leq \mu\left(\rho Q_{ij}\right)^{-1}\left(S_{Q_{ij},R_{j}}\right)^{-1}$. 

We have 
\begin{eqnarray*}
\left\|\lambda_{ij}\left(f-f_{\tilde{R}_{j}}\right)a_{ij}\right\|_{L^{1}(\mu)}&\leq&\left|\lambda_{ij}\right|\int_{Q_{ij}}\left|f-f_{\tilde{R}_{j}}\right|\left|a_{ij}\right|d\mu\\
&\leq& \left|\lambda_{ij}\right|\left(\int_{Q_{ij}}\left|f-f_{\tilde{Q}_{ij}}\right|\left|a_{ij}\right|d\mu+\int_{Q_{ij}}\left|f_{\tilde{R_{j}}}-f_{\tilde{Q}_{ij}}\right|\left|a_{ij}\right|d\mu\right)\\
&\leq& C\left|\lambda_{ij}\right|\left\|f\right\|_{RBMO(\mu)},
\end{eqnarray*}
according to Inequalities (\ref{relationQR}) and (\ref{a3}), which proves that the first series $\sum^{\infty}_{j=1}\left(f-f_{\tilde{R}_{j}}\right)b_{j}=\sum^{\infty}_{j=1}\sum^{\infty}_{i=1}\lambda_{ij}\left(f-f_{\tilde{R}_{j}}\right)a_{ij}$ converges normally in $L^{1}(\mu)$, since the atomic decomposition theorem asserts that the double series $\sum_{i,j}\left|\lambda_{ij}\right|$ converges. It remains to prove the convergence of
\begin{equation} 
S=\sum^{\infty}_{j=1}\left(\sum^{\infty}_{i=1}\lambda_{ij}f_{\tilde{R}_{j}}a_{ij}\right)=\sum^{\infty}_{j=1}f_{\tilde{R}_{j}}b_{j}
\end{equation}
in $\H^{\wp}(\nu)$. For this purpose, we have to prove that the sequence $S_{N}=\M \left(\sum^{N}_{j=1}f_{\tilde{R}_{j}}b_{j}\right)$ is Cauchy in $L^{\wp}(\nu)$.  This is equivalent to prove that $\underset{{l\rightarrow\infty}}{\lim}\left\|\M \left(\tilde{S}^{k}_{l}\right)\right\|_{L^{\wp}(\nu)}=0$, where 
\begin{equation}
\tilde{S}^{k}_{l}=\sum^{k}_{j=l}f_{\tilde{R}_{j}}b_{j}\text{ with } l\leq k.
\end{equation}
Since 
\begin{equation}
\M \left(f_{\tilde{R}_{j}}b_{j}\right)\leq\left|f-f_{\tilde{R}_{j}}\right|\M (b_{j})+\left|f\right|\M (b_{j}),
\end{equation}
we have that 
\begin{equation}
\left\|\M \left(\tilde{S}^{k}_{l}\right)\right\|_{L^{\wp}(\nu)}\leq4\left\|\sum^{k}_{j=l}\left|f-f_{\tilde{R}_{j}}\right|\M (b_{j})\right\|_{L^{1}(\mu)}+4\left\|\sum^{k}_{j=l}\left|f\right|\M (b_{j})\right\|_{L^{\wp}(\nu)},\label{I2}
\end{equation}
according to (\ref{additivity}) and the fact that $\left\|f\right\|_{L^{\wp}(\mu)}\leq\left\|f\right\|_{L^{1}(\mu)}$ for all measurable functions $f$. Let us consider the first term in the second member of (\ref{I2}). We have 
\begin{equation}
\left\|\sum^{k}_{j=l}\left|f-f_{\tilde{R}_{j}}\right|\M (b_{j})\right\|_{L^{1}(\mu)}\leq\sum^{k}_{j=l}\left\|\sum^{\infty}_{i=1}\left|\lambda_{ij}\right|\left(\left|f-f_{\tilde{Q}_{ij}}\right|+\left|f_{\tilde{R}_{j}}-f_{\tilde{Q}_{ij}}\right|\right)\M (a_{ij})\right\|_{L^{1}(\mu)},
\end{equation}
since $\M (b_{j})\leq\sum^{\infty}_{i=1}\left|\lambda_{ij}\right|\M(a_{ij})$. From the definition of $\M(a_{ij})$, we have 
\begin{equation}
 \M(a_{ij})(x)\leq\mu\left(\rho Q_{ij}\right)^{-1}\left(S_{Q_{ij},R_{j}}\right)^{-1},
\end{equation}
so that taking into consideration relation (\ref{relationQR}), we obtain
\begin{equation}
\left\|\left(\left|f-f_{\tilde{Q}_{ij}}\right|+\left|f_{\tilde{R}_{j}}-f_{\tilde{Q}_{ij}}\right|\right)\M (a_{ij})\right\|_{L^{1}(\mu)}\leq C\left\|f\right\|_{RBMO(\mu)}.
\end{equation}
Thus 
\begin{equation}
\lim_{l\rightarrow\infty}\left\|\sum^{k}_{j=l}\left|f-f_{\tilde{R}_{j}}\right|\M (b_{j})\right\|_{L^{1}(\mu)}=0,
\end{equation}
since the double series $\sum^{\infty}_{j=1}\left(\sum^{\infty}_{i=1}\left|\lambda_{ij}\right|\right)$ converges. Let us consider now the series $$\left\|\sum^{k}_{j=l}\left|f\right|\M (b_{j})\right\|_{L^{\wp}(\nu)}=\left\|\left|f\right|\sum^{k}_{j=l}\M (b_{j})\right\|_{L^{\wp}(\nu)}.$$
We have
\begin{equation}
\left\|\sum^{k}_{j=l}\M (b_{j})\right\|_{L^{1}(\mu)}\leq C\sum^{k}_{j=l}\left(\sum^{\infty}_{i=1}\left|\lambda_{ij}\right|\right),
\end{equation}
according to Lemma 3.1 of \cite{T2}. Furthermore, we have
\begin{equation}
\left\|\left|f\right|\sum^{k}_{j=l}\M (b_{j})\right\|_{L^{\wp}(\nu)}\leq\left\|f\right\|_{\RBMO^{+}(\mu)}\left\|\sum^{k}_{j=l}\M (b_{j})\right\|_{L^{1}(\mu)},
\end{equation}
according to (\ref{holder3}).
\end{proof}

\begin{defn}(\cite{BF}) $L^{\wp}_{*}$ is the space of functions $f$ such that
$$\|f\|_{L^{\wp}_{*}}:=\sum_{j\in \bZ^n}\|f\|_{L^{\wp}(j+\mathbb Q)}<\infty,$$
where $\mathbb Q$ is the unit cube centered at $0$.
\end{defn}

 We accordingly define $\H^{\wp}_{*}$. Using the concavity described above, we have $\wp(st)\leq C s\wp(t)$ for $s>1$. It follows that $L^\wp$ is contained in $L^{\wp}_{*}$ as a consequence of the fact that $\|f\|_{L^{\wp} (j+\mathbb Q)}\leq \int_{j+\mathbb Q} \wp(|f|)d\mu(x)$. The converse inclusion is not true.

\begin{thm}\label{main2}
For $h\in\mathcal H^1(\mu)$ and $f\in\rbmo (\mu)$, the product $f\times h$ can be given a meaning in
the sense of distributions. Moreover, we have the inclusion
\begin{equation}
  f\times h\in L^1(\mu)+ \mathcal H^{\wp}_{\ast}(\mu).
\end{equation}
\end{thm}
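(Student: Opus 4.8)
The plan is to transcribe the proof of Theorem \ref{main1}, changing only the two ingredients that are sensitive to the passage from $\RBMO(\mu)$ to its local version $\rbmo(\mu)$. Starting from an atomic block decomposition $h=\sum_j b_j$ of $h\in\H^1(\mu)$, with $b_j=\sum_i\lambda_{ij}a_{ij}$ and $a_{ij}$ supported in $Q_{ij}\subset R_j$, I would set
\[
f\times h=\sum_{j}\left(f-f_{\tilde R_j}\right)b_j+\sum_{j}f_{\tilde R_j}b_j .
\]
I will show that the first sum converges normally in $L^1(\mu)$ and the second in $\H^{\wp}_{*}(\mu)$; as both modes of convergence imply convergence in $\mathcal D'(\bR^d)$, this gives $f\times h$ a meaning as a distribution and establishes $f\times h\in L^1(\mu)+\H^{\wp}_{*}(\mu)$. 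The reason the amalgam space $\H^{\wp}_{*}(\mu)$ appears in place of the weighted space $\H^{\wp}(\nu)$ of Theorem \ref{main1} is structural: a function in $\rbmo(\mu)$ has on each unit cube a \emph{bounded} mean --- no logarithmic growth at infinity, but a value depending on the cube --- so the weight $\nu$, which in the $\RBMO$ case absorbed the growth of $f_{\tilde Q}$, is here replaced by the summation over unit cubes built into $\|\cdot\|_{L^{\wp}_{*}}$.

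For the first series the $L^1$ estimate of Theorem \ref{main1} goes through, the new feature being that the conditions defining $\rbmo(\mu)$ behave differently on small and large cubes, so I split the analysis according to $\ell(R_j)$. When $\ell(R_j)\le1$ every $Q_{ij}\subset R_j$ also has side at most $1$, so (\ref{C1}) and the small-scale form of (\ref{C2}) and (\ref{relationQR}) apply, and the argument of Theorem \ref{main1} is reproduced verbatim, giving $\|\lambda_{ij}(f-f_{\tilde R_j})a_{ij}\|_{L^1(\mu)}\le C|\lambda_{ij}|\,\|f\|_{\rbmo(\mu)}$. When $\ell(R_j)>1$ the mean is uniformly controlled, $|f_{\tilde R_j}|\le C\|f\|_{\rbmo(\mu)}$, because $\widetilde{R_j}$ is a doubling cube of side $\ge1$ and (\ref{C3}) bounds its mean; I then sub-split the $Q_{ij}$ by size, using (\ref{C1}) when $\ell(Q_{ij})\le1$ and (\ref{C3}) when $\ell(Q_{ij})>1$, together with $\|a_{ij}\|_{L^\infty(\mu)}\le\mu(\rho Q_{ij})^{-1}S_{Q_{ij},R_j}^{-1}$ and $\eta\le\rho$ to absorb the measures, again bounding the $L^1$ norm by $C|\lambda_{ij}|\,\|f\|_{\rbmo(\mu)}$. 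Summing against $\sum_{i,j}|\lambda_{ij}|<\infty$ gives normal convergence in $L^1(\mu)$.

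For the second series I would again show that $S_N=\M\big(\sum_{j\le N}f_{\tilde R_j}b_j\big)$ is Cauchy in $L^{\wp}_{*}(\mu)$ by bounding $\|\M(\tilde S^k_l)\|_{L^{\wp}_{*}(\mu)}$ for the tails $\tilde S^k_l=\sum_{j=l}^k f_{\tilde R_j}b_j$. Writing $\M(f_{\tilde R_j}b_j)=|f_{\tilde R_j}|\,\M(b_j)\le\big(|f-f_{\tilde R_j}|+|f|\big)\M(b_j)$ and using the amalgam form of (\ref{additivity}) --- obtained by applying (\ref{additivity}) on each unit cube and summing --- splits the bound into an oscillation term and a term carrying $|f|$. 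The oscillation term is controlled in $L^1(\mu)$ exactly as in the previous paragraph and transferred to $L^{\wp}_{*}(\mu)$ through $\|g\|_{L^{\wp}_{*}(\mu)}\le\int_{\bR^d}\wp(|g|)\,d\mu\le\|g\|_{L^1(\mu)}$, which follows cube by cube from the inequality $\|g\|_{L^{\wp}(m+\mathbb Q)}\le\int_{m+\mathbb Q}\wp(|g|)\,d\mu$ noted after the definition of $L^{\wp}_{*}$. The term $\big\||f|\sum_{j=l}^k\M(b_j)\big\|_{L^{\wp}_{*}(\mu)}$ is handled by the amalgam Hölder inequality below, together with $\big\|\sum_{j=l}^k\M(b_j)\big\|_{L^1(\mu)}\le C\sum_{j=l}^k\sum_i|\lambda_{ij}|$ (Lemma 3.1 of \cite{T2}); both terms tend to $0$ as $l\to\infty$.

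The crux, and the step I expect to be the main obstacle, is the amalgam analogue of the Hölder-type inequality (\ref{holder3}):
\[
\big\||f|\,g\big\|_{L^{\wp}_{*}(\mu)}\le C\,\|f\|_{\rbmo(\mu)}\,\|g\|_{L^1(\mu)},\qquad f\in\rbmo(\mu),\ g\in L^1(\mu).
\]
I would prove it one unit cube at a time. On $m+\mathbb Q$, applying the local John--Nirenberg inequality for $\rbmo(\mu)$ (Theorem 2.6 of \cite{Y}) to an auxiliary cube of side $2>1$ about it, and invoking the growth condition (\ref{nondoubling}), which bounds $\mu$ of fixed dilates of unit cubes \emph{uniformly} in $m$, yields $\int_{m+\mathbb Q}\big(e^{|f|/k}-1\big)\,d\mu\le 1$ with $k=C\|f\|_{\rbmo(\mu)}$, uniformly in $m$; equivalently $\|f\|_{L^{\Xi}(m+\mathbb Q)}\le C\|f\|_{\rbmo(\mu)}$ uniformly. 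The $L^{\Xi}$--$L^{\tilde{\wp}}$ duality recalled in Section \ref{hardy-orlicz} then gives $\||f|g\|_{L^{\wp}(m+\mathbb Q)}\le C\|f\|_{\rbmo(\mu)}\|g\|_{L^1(m+\mathbb Q)}$, and summing over $m\in\bZ^d$ collapses the right-hand side to $\|g\|_{L^1(\mu)}$. The two delicate points are the \emph{uniform} exponential integrability of $f$ over all unit cubes --- this is precisely where the local structure of $\rbmo(\mu)$, through (\ref{C3}) and the growth bound, performs the role played by the weight in Theorem \ref{main1} --- and the verification that none of the cube-by-cube constants depend on $m$. With this inequality in hand the remaining estimates are a direct transcription of the proof of Theorem \ref{main1}.
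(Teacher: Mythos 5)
Your proposal is correct and follows essentially the same route as the paper: the same splitting $f\times h=\sum_j(f-f_{\tilde R_j})b_j+\sum_j f_{\tilde R_j}b_j$, with the first sum handled in $L^1(\mu)$ as in Theorem \ref{main1} and the second reduced to the cube-by-cube H\"older-type bound $\|fg\|_{L^{\wp}(m+\mathbb Q)}\leq C\|f\|_{\rbmo(\mu)}\|g\|_{L^1(m+\mathbb Q)}$, proved via the local John--Nirenberg inequality and the $L^{\Xi}$--$L^{\tilde\wp}$ duality (with constants uniform in $m$ by the growth condition) and then summed over the unit cubes. The only differences are presentational: you are more explicit than the paper about splitting the $L^1$ estimates according to $\ell(R_j)$ and $\ell(Q_{ij})$, while you compress into one line the normalization and the split over $\{|f|\leq1\}$ and $\{|f|>1\}$ that the paper uses to pass from the duality pairing to the $L^{\wp}$ bound against $\|g\|_{L^1}$.
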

\begin{proof}
The proof is inspired by the one given in \cite{BF} in the case of Lebesgue measure.
Let $f\in\rbmo(\mu)$ and $h\in\H^1(\mu)$ being as in the proof of Theorem \ref{main1}.
 The series 
 \begin{equation}
 \sum_{j}\left(\sum_{i}\lambda_{ij}(f-f_{\tilde{R}_{j}})a_{ij}\right),\ \sum_{j}(f-f_{\tilde{R}_{j}})\M (b_{j})\text{ and }\sum_{j}\M (b_{j})
 \end{equation}
 converge normally in $L^{1}(\mu)$ and
 \begin{equation}
 \M \left(\sum_{j}b_{j}f_{\tilde{R}_{j}}\right)\leq\sum_{j}\left|f-f_{\tilde{R}_{j}}\right|\M (b_{j})+\left|f\right|\sum_{j}\M (b_{j}).\label{decompo}
 \end{equation}
  Thus we just have to prove that the second term in the right hand side of (\ref{decompo}) is in $L^{\wp}_{\ast}(\mu)$. Let $Q$ be a cube of side length 1. By John-Nirenberg inequality on $\rbmo(\mu)$, we have that there exists $c_{7}>0$ (we can choose any number greater than $\frac{1}{c_{5}}+\frac{c_{4}2^{n}}{c_{5}}$) such that 
  \begin{equation}
  \int_{Q}\left(e^{\frac{\left|f(x)\right|}{c_{7}\left\|f(x)\right\|_{\rbmo(\mu)}}}-1\right)d\mu(x)\leq 1.
  \end{equation}
  We claim that for $\psi\in L^{1}(\mu)$
  \begin{equation}
  \left\|f\psi\right\|_{L^{\wp}(Q)}\leq C\left\|f\right\|_{\rbmo(\mu)}\int_{Q}\left|\psi\right|d\mu.
  \end{equation}
  In fact, by homogeneity, we can assume that $c_{7}\left\|f\right\|_{\rbmo(\mu)}=1$ and it is sufficient to find some constant $c$ such that for $\int_{Q}\left|\psi\right|d\mu=c$ we have   
 $$\int_Q \frac {|f\psi|}{\log (e+|f\psi|)}d\mu\leq 1.$$ 
 We have 
 \begin{equation} \int_{Q}\frac{\left|f\psi\right|}{\log(e+\left|f\psi\right|)}d\mu=\int_{Q\cap\left\{\left|f\right|\leq1\right\}}\frac{\left|f\psi\right|}{\log(e+\left|f\psi\right|)}d\mu+\int_{Q\cap\left\{\left|f\right|>1\right\}}\frac{\left|f\psi\right|}{\log(e+\left|f\psi\right|)}d\mu.
 \end{equation}
 The first term in the second member is bounded by $\int_{Q}\left|\psi\right|d\mu$ and for the second term, we have 
 \begin{eqnarray*} \int_{Q\cap\left\{\left|f\right|>1\right\}}\frac{\left|f\psi\right|}{\log(e+\left|f\psi\right|)}d\mu&\leq&\int_{Q\cap\left\{\left|f\right|>1\right\}}\left|f\right|\frac{\left|\psi\right|}{\log(e+\left|\psi\right|)}d\mu\\
 &\leq&\left\|f\right\|_{L^{\Xi}(Q)}\left\|\frac{\left|\psi\right|}{\log(e+\left|\psi\right|)}\right\|_{L^{\tilde{\wp}}(Q)}\leq C\left\|\frac{\left|\psi\right|}{\log(e+\left|\psi\right|)}\right\|_{L^{\tilde{\wp}}(Q)}.
 \end{eqnarray*}
 But
 \begin{eqnarray*}
 \int_{Q}\frac{\left|\psi\right|}{\log(e+\left|\psi\right|)}\log\left(e+\frac{\left|\psi\right|}{\log(e+\left|\psi\right|)}\right)d\mu&\leq& \int_{Q}\frac{\left|\psi\right|}{\log(e+\left|\psi\right|)}\log\left(e+\left|\psi\right|\right)d\mu\\
 &\leq& \int_{Q}\left|\psi\right|d\mu\\
 \end{eqnarray*}
 Thus if $c<\frac{1}{2}$ and $\int_{Q}\left|\psi\right|d\mu=c$ the result follows. We have an estimate for each cube $j+\mathbb Q$, and sum up.
This finishes the proof.
\end{proof}
Since we do not have any maximal function characterization of the local Hardy spaces on non-homogeneous space in the literature, we are going to define the local space corresponding to $\H^{1}_{\ast}$ in the same manner as in \cite{BF}. For this purpose, we put
\begin{equation}
\M^{(1)}f(x)=\sup_{F_{loc}(x)}\left|\int f\varphi d\mu\right|,
\end{equation}
where $F_{loc}(x)$ denotes the set of elements belonging to $F(x)$ as define in Section \ref{hardy-orlicz}, but having their support in the cube $Q(x,1)$ centered at $x$ with side length $1$.  A locally integrable function $f$ belongs to the space $\h^{\wp}_{*}(\mu)$ if $\M^{(1)}f\in L^{\wp}_{\ast}(\mu)$.

\begin{prop}
For $h$ a function in $\h^1(\mu)$ and $b$ a function in $\rbmo(\mu)$, the product $b\times h$ can be given a meaning in the
sense of distributions. Moreover, we have the inclusion
\begin{equation}\label{inclusion1}
  b\times h\in L^1(\mu)+ \h^{\wp}_{*}(\mu).
\end{equation}
\end{prop}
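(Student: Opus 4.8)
The plan is to reduce the statement to Theorem~\ref{main2} together with a single direct $L^{1}$ estimate, exploiting the fact that an element of $\h^{1}(\mu)$ is assembled from two kinds of pieces. Fix a decomposition $h=\sum_j g_j$ into atomic blocks and blocks as in the definition of $\h^{1}(\mu)$, with $\sum_j|g_j|_{\h^{1}_{atb}(\mu)}<\infty$, and split it as $h=h_1+h_2$, where $h_1=\sum_{j:\ell(R_j)\le1}g_j$ collects the atomic blocks (those carrying the cancellation $\int g_j\,d\mu=0$) and $h_2=\sum_{j:\ell(R_j)>1}g_j$ collects the blocks. Each atomic block $g_j$ is a bona fide atomic block of the global space $\H^{1}(\mu)$ with $\|g_j\|_{\H^{1}(\mu)}\le|g_j|_{\h^{1}_{atb}(\mu)}$, so the series $h_1$ converges absolutely in the Banach space $\H^{1}(\mu)$ and $h_1\in\H^{1}(\mu)$; on the other hand every block satisfies $\|g_j\|_{L^{1}(\mu)}\le|g_j|_{\h^{1}_{atb}(\mu)}$, so $h_2\in L^{1}(\mu)$. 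I would define $b\times h$ by $\langle b\times h,\varphi\rangle:=\langle b\varphi,h\rangle$, which is legitimate since $b\varphi\in\rbmo(\mu)$ by the Lemma above and $\rbmo(\mu)=(\h^{1}(\mu))^{*}$; by linearity it splits as $\langle b\varphi,h_1\rangle+\langle b\varphi,h_2\rangle$, the first pairing being exactly the product supplied by Theorem~\ref{main2}.

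For the first piece I would simply invoke Theorem~\ref{main2} with $b\in\rbmo(\mu)$ and $h_1\in\H^{1}(\mu)$, obtaining $b\times h_1\in L^{1}(\mu)+\H^{\wp}_{*}(\mu)$. Since $\M^{(1)}g\le\M g$ pointwise and the functional $\|\cdot\|_{L^{\wp}_{*}}$ is monotone for the pointwise order, one has $\H^{\wp}_{*}(\mu)\subseteq\h^{\wp}_{*}(\mu)$, and hence $b\times h_1\in L^{1}(\mu)+\h^{\wp}_{*}(\mu)$. Nothing new is needed here.

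The second piece is the only genuinely new point. As $h_2\in L^{1}(\mu)$, $b\in L^{1}_{loc}(\mu)$ and $\varphi$ is bounded, the pairing is $\langle b\varphi,h_2\rangle=\int b\varphi\,h_2\,d\mu$, so it suffices to prove $b\,h_2\in L^{1}(\mu)$; by subadditivity this reduces to a uniform bound $\|b\,g\|_{L^{1}(\mu)}\le C\|b\|_{\rbmo(\mu)}\,|g|_{\h^{1}_{atb}(\mu)}$ for one block $g=\sum_i\lambda_i a_i$ supported in a cube $R$ with $\ell(R)>1$, the $a_i$ being supported in cubes $Q_i\subset R$. Writing
\[
\|b\,g\|_{L^{1}(\mu)}\le\sum_i|\lambda_i|\,\|a_i\|_{L^{\infty}(\mu)}\int_{Q_i}|b|\,d\mu ,
\]
and recalling $\|a_i\|_{L^{\infty}(\mu)}\le\mu(\rho Q_i)^{-1}S_{Q_i,R}^{-1}$ from (\ref{a3}), the task becomes the uniform estimate $\int_{Q_i}|b|\,d\mu\le C\,\mu(\rho Q_i)\,S_{Q_i,R}\,\|b\|_{\rbmo(\mu)}$. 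When $\ell(Q_i)>1$ this is immediate from (\ref{C3}). When $\ell(Q_i)\le1$ I would split $\int_{Q_i}|b|\le\int_{Q_i}|b-b_{\tilde{Q}_i}|\,d\mu+|b_{\tilde{Q}_i}|\,\mu(Q_i)$, bound the first integral by (\ref{C1}), and control $|b_{\tilde{Q}_i}|$ by comparing $\tilde{Q}_i$ to a doubling cube of side length comparable to $1$ containing $Q_i$ through (\ref{C2}) and (\ref{C3}); this gives $|b_{\tilde{Q}_i}|\le C\,S_{Q_i,R}\,\|b\|_{\rbmo(\mu)}$, whose factor $S_{Q_i,R}$ is precisely the one absorbed by the normalization $S_{Q_i,R}^{-1}$ in (\ref{a3}).

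Collecting the two pieces, $b\times h=b\times h_1+b\,h_2$ is a well-defined distribution lying in $L^{1}(\mu)+\h^{\wp}_{*}(\mu)$, which is (\ref{inclusion1}). The main obstacle is the block estimate just sketched: in contrast with Theorem~\ref{main2} there is no cancellation to exploit, so $b\,g$ must be placed in $L^{1}$ directly, and the delicate step is the control of the oscillation mean $b_{\tilde{Q}_i}$ over a small sub-cube $Q_i$ of the large cube $R$, whose logarithmic growth in $\log(\ell(R)/\ell(Q_i))$ is exactly matched by the factor $S_{Q_i,R}^{-1}$; verifying that these two effects cancel across the regimes $\ell(Q_i)>1$ and $\ell(Q_i)\le1$ is where the care lies.
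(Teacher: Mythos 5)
Your proof is correct, but it takes a genuinely different route from the paper. The paper does not split $h$ into its atomic-block and block parts: it keeps the single decomposition $f\times h=\sum_j(f-f_{\tilde R_j})b_j+\sum_j f_{\tilde R_j}b_j$ used for Theorems \ref{main1} and \ref{main2}, observes that the $L^1$ estimate for the first sum never used the cancellation of the $b_j$'s, and then places the \emph{entire} second sum in $\h^{\wp}_{*}(\mu)$ by running the Theorem \ref{main2} argument with the local grand maximal operator $\M^{(1)}$; the one new ingredient is the bound $\M^{(1)}a_{ij}\leq\left(\mu(2Q_{ij})S_{Q_{ij},R_j}\right)^{-1}\chi_{2R_j}$ for the pieces of a block, which holds because test functions in $F_{loc}(x)$ are supported in $Q(x,1)$ and $\ell(R_j)>1$, so that $\|\M^{(1)}b_j\|_{L^1(\mu)}\leq C|b_j|_{\h^1_{atb}(\mu)}$ follows from Tolsa's tail estimate even without cancellation. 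You instead write $h=h_1+h_2$ with $h_1\in\H^1(\mu)$ (the atomic blocks) and $h_2\in L^1(\mu)$ (the blocks), quote Theorem \ref{main2} for $b\times h_1$ together with the easy inclusion $\H^{\wp}_{*}(\mu)\subseteq\h^{\wp}_{*}(\mu)$, and put $b\,h_2$ directly into $L^1(\mu)$ via the estimate $\int_{Q_i}|b|\,d\mu\leq C\,\mu(\rho Q_i)\,S_{Q_i,R}\,\|b\|_{\rbmo(\mu)}$. Your route yields a slightly stronger conclusion for the block part (it lands in $L^1(\mu)$, not merely in $\h^{\wp}_{*}(\mu)$) and avoids any maximal-function analysis of blocks; its price is the $\rbmo$ mean estimate over small subcubes $Q_i$ of a large cube $R$, whose chain through \eqref{C1}, \eqref{C2} and \eqref{C3} (splitting on whether $\ell(\tilde Q_i)\leq1$ or $>1$, and checking $S_{Q_i,Q'}\leq CS_{Q_i,R}$ for an intermediate cube $Q'$ of unit size) you only sketch but which does go through, being essentially the computation in the paper's Lemma 2.3 adapted to $\rbmo(\mu)$. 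The paper's route, by contrast, needs only the support observation for $\M^{(1)}$ and reuses the Theorem \ref{main2} machinery verbatim.
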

\begin{proof}
Let $f\in\rbmo(\mu)$ and $h\in\h^1(\mu)$ with $h=\sum_{j}b_{j}$ where $b_{j}$'s are atomic blocks or blocks. Since we do not use the cancellation property of $b_{j}$'s to prove that the $\sum_{j}(f-f_{\tilde{R}_{j}})b_{j}$ converge absolutely in $L^{1}(\mu)$, it follows that the result remains true in this case. Thus we just have to prove that the second term belongs to the amalgam space $\h^{\wp}_{*}(\mu)$. This immediate if we prove that for any bock $b_{j}$, the quantity $\left\|\M^{(1)}b_{j}\right\|_{L^{1}(\mu)}$ is bounded by a constant which is independent on $b_{j}$. Let $b_{j}=\sum^{\infty}_{i=1}\lambda_{ij}a_{ij}$, where $a_{ij}$ is supported in the cube $Q_{ij}\subset R_{j}$ and satisfy $\left\|a_{ij}\right\|_{L^{\infty}(\mu)}\leq \left(\mu(2Q_{ij})S_{Q_{ij},R_{j}}\right)^{-1}$. For every integer $i$, we have 
\begin{equation}
\M^{(1)}a_{ij}(x)\leq \left(\mu(2Q_{ij})S_{Q_{ij},R_{j}}\right)^{-1}\chi_{2R_{j}}(x),
\end{equation}
where $\chi_{2R_{j}}$ denote the characteristic function of $2R_{j}$. In fact, if $\varphi\in F_{loc}(x)$ then $\int a_{ij}\varphi d\mu\neq 0$ only if $x\in 2R_{j}$, since $\ell(R_{j})>1$. Proceeding as in the prove of Proposition 2.6 in \cite{T2}, we have
\begin{equation}
\int_{\bR^{d}}\M^{(1)}a_{ij}(x)d\mu(x)=\int_{2R_{j}}\M^{(1)}a_{ij}(x)d\mu(x)\leq C,
\end{equation}
where $C$ is independent of $i$ and $j$. Then we conclude as in the proof of Theorem \ref{main2}. 
\end{proof}
\noindent{\bf Acknowledgments.}  I would like to thank the referee for his through revision of the paper and his useful comment. I would also like to thank professors Aline Bonami and Xavier Tolsa for some helpful discussions.

\end{document}